\documentclass[12 pt]{article}%
\usepackage{amsmath, amsfonts, amsthm, color,latexsym}
\usepackage{amsmath}
\usepackage{amsfonts}
\usepackage{amssymb}
\usepackage{color, soul}
\usepackage{enumerate}
\setcounter{MaxMatrixCols}{30}
\providecommand{\U}[1]{\protect\rule{.1in}{.1in}}
\newtheorem{theorem}{Theorem}[section]
\newtheorem{proposition}[theorem]{Proposition}
\newtheorem{corollary}[theorem]{Corollary}

\newtheorem{examples}[theorem]{Examples}
\newtheorem{remark}[theorem]{Remark}

\newtheorem{lemma}[theorem]{Lemma}
\newtheorem{final remark}[theorem]{Final Remark}
\newtheorem{definition}[theorem]{Definition}
\textwidth=16.1cm
\textheight=23cm
\hoffset=-15mm
\voffset=-20mm

\allowdisplaybreaks[4]

\begin{document}

\title{\sc Transformations of sequence spaces by multipolinomials}
\author{Joilson Ribeiro,
	~ Fabr\'icio Santos\thanks{Supported by CAPES Doctoral scholarship.	
	\thinspace \hfill\newline\indent2010 Mathematics Subject
Classification: 46B45, 47L22, 47B10.\newline\indent Key words: Banach sequence spaces, ideals of multilinear operators, multiple summing operators.}}
\date{}
\maketitle

\begin{abstract}
In this paper we introduce a new approach to the concept of multipolynomials and generalize several results of the homogeneous polynomials and symmetric multilinear applications. We also present an abstract approach to the concept of absolutely summing multipolynomials.
\end{abstract}

\section{Introduction and background}

The concept of multipolynomial was introduced by T. Velanga in \cite{V17, V17MP}, as an attempt to unify the successful theories of homogeneous polynomials and multilinear applications between Banach spaces. Among others advantages, the theory of multipolynomials contemplates, as a particular case, the multilinear operators and homogeneous polinomials theory. 

In \cite{V17MP} it has been shown that each multipolynomial is, in particular, a homogeneous polynomial and, consequently, it can be associated to a single symmetric multilinear application, that is, each $(n_1,\dots,n_m)$-homogeneous polynomial $P\colon E_1\times\cdots\times E_m\rightarrow F$ can be associated to the symmetric application $\check{P}\colon\left(E_1\times\cdots\times E_m\right)^{\left(n_1+\cdots+n_m\right)}\rightarrow F$. Such application, although obtained in a natural way, presents many difficulties in practical use, due to the large size of its domain.


In an attempt to mitigate such difficulty, we introduce a new approach for multipolynomials where each of them is associated with a multilinear application with domain $E_1^{n_1}\times\cdots\times E_m^{n_m}$ acting on $F$. One of our first goals in this work is to investigate, under what conditions, the abovementioned application $T\colon E_1^{n_1}\times\cdots\times E_m^{n_m}\rightarrow F$ is unique. For this it was necessary to generalize the concept of symmetry and the famous Polarization Formula.



This approach allows us to explore the class of absolutely summing multipolynomials, which in the case of multilinear applications and homogeneous polynomials have been studied by several authors, such as: the absolutely summing \cite{BBDP07, M03}, the Cohen strongly summing \cite{C14}, the $(s, q, p_1,\dots, p_n)$-mixing summing \cite{BPSS15, M04}, the mid summing \cite{BCS17, KS14, RS17} and the almost summing \cite{PR12}. As such constructs followed a similar script, there arose the natural concern of constructing an abstract approach for such multilinear operators and the homogeneous polynomials \cite{BC17, RS17, S13}. In this spirit, we will construct in Section \ref{AbsolutelyMultipolynomials} an abstract approach to the absolutely summing multipolynomials.


From now on, $E$, $E_1,\dots,E_m$, $F$, $G_1,\dots,G_m$, $H$ shall denote Banach spaces over $\mathbb{K}=\mathbb{R}$ or $\mathbb{C}$. By ${\cal L}(E_1, \ldots, E_n;F)$ we denote the Banach space of continuous $n$-linear operators from $E_1 \times \cdots \times E_n$ to $F$ endowed with the usual uniform norm $\|\cdot\|$.

%

The next two definitions were introduced in \cite{BC17}.

\begin{definition}{}
A class of vector-valued sequences $\gamma_s$, is a rule that assigns to each Banach space $E$ a Banach space $\gamma_s(E)$  of $E$-valued sequences, that is
$\gamma_s(E)$ is a vector subspace of $E^{\mathbb{N}}$ with the coordinatewise operations, such that:

\begin{equation*}
c_{00}(E) \subset \gamma_s(E) \overset{1}{\hookrightarrow} \ell_{\infty}(E) \quad \text{and} \quad \|e_j\|_{\gamma_s(\mathbb{K})} = 1, \quad \forall j=1,\dots, n.
\end{equation*}
where $e_j$ is the vector with $1$ in the $j$-th coordinate and zero in the other coordinates, and
the symbol $E \overset{1}{\hookrightarrow} F$ means that $E $ is a linear subspace of $F$ and $\|x\|_F \le \|x\|_E$ .
\end{definition}

\begin{definition}
\begin{description}
\item $(i)$ A sequence class $\gamma_s$ is \textit{finitely determined} if for every $(x_j)_{j=1}^{\infty} \in E^{\mathbb{K}}$,
\begin{equation*}
(x_j)_{j=1}^{\infty} \in \gamma_s(E) \iff \sup_k\left\|\left(x_j \right)_{j=1}^{k} \right\|_{\gamma_s(E)} < \infty.
\end{equation*}
In this case, 
\begin{equation*}
\left\|\left(x_j \right)_{j=1}^{\infty} \right\|_{\gamma_s(E)} := \sup_k\left\|\left(x_j \right)_{j=1}^{k} \right\|_{\gamma_s(E)}.
\end{equation*}

\item $(ii)$ A sequence class $\gamma_s$ is said to be linearly stable if for every $u \in \mathcal{L}(E; F)$
\begin{equation*}
\left(u\left(x_j \right)\right)_{j=1}^{\infty} \in \gamma_s(F)
\end{equation*}
holds whenever $\left(x_j \right)_{j=1}^{\infty} \in \gamma_s(E)$ and $\|\hat{u} : \gamma_s(E) \rightarrow \gamma_s(F)\| = \|u\|$.

\item $(iii)$ Given sequence classes $\gamma_{s_1},\dots, \gamma_{s_n}, \gamma_s$. We say that $\gamma_{s_1}(\mathbb{K})\cdots \gamma_{s_n}(\mathbb{K}) \overset{1}{\hookrightarrow} \gamma_s(\mathbb{K})$ if $\left(\lambda_j^{(1)}\cdots \lambda_j^{(n)} \right)_{j=1}^{\infty} \in \gamma_s(\mathbb{K})$ and
\begin{equation*}
\left\|\left(\lambda_j^{(1)}\cdots \lambda_j^{(n)} \right)_{j=1}^{\infty} \right\|_{\gamma_s(\mathbb{K})} \le \prod_{i=1}^{\infty}\left\|\left(\lambda_j^{(i)} \right)_{j=1}^{\infty} \right\|_{\gamma_{s_i}(\mathbb{K})}
\end{equation*}
whenever $\left(\lambda_j^{(i)} \right)_{j=1}^{\infty} \in \gamma_{s_i}(\mathbb{K}), i=1,\dots, n$.
\end{description}
\end{definition}

\section{Multipolynomials $($New approach$)$}

We will present in this section a new approach to $(n_1,\dots, n_m)$-homogeneous polynomials. This approach associates each $(n_1,\dots, n_m)$-homogeneous polynomial with an $(n_1+\cdots + n_m)$-linear application. We further define a new concept of symmetry and generalize the classical Polarization Formula.

\begin{definition}\rm Let $m\in\mathbb{N}$ and $(n_1,\ldots,n_m)\in\mathbb{N}^m$. A mapping $P\colon E_1\times\cdots\times E_m\longrightarrow F$ is said to be an \textit{$(n_1,\ldots,n_m)$-homogenous polynomial} if, for each $i=1,\ldots,m$, the mapping $$P(x_1,\ldots,x_{i-1},\cdot,x_{i+1},\ldots,x_m)\colon E_i\longrightarrow F$$ is an $n_i$-homogenous polynomial for all fixed $x_1\in E_1,\ldots,x_{i-1}\in E_{i-1},x_{i+1}\in E_{i+1},\ldots,x_m\in E_m$. The mapping $P$ is a {\it multipolynomial} if it is an $(n_1,\ldots,n_m)$-homogenous polynomial for some $(n_1,\ldots,n_m)\in\mathbb{N}^m$. The space of all continuous $(n_1,\ldots,n_m)$-homogenous polynomials from $E_1\times\cdots\times E_m$ to $F$ is denoted by ${\cal MP}\left(^{n_1}E_1,\ldots,^{n_m}E_m;F\right)$. When $F=\mathbb{K}$, we simply write ${\cal MP}\left(^{n_1}E_1,\ldots,^{n_m}E_m\right)$. Defining
$$\|P\| = \sup\left\{\|P(x_1, \ldots, x_m)\| : x_i \in B_{E_1}, i = 1,\ldots, n \right\}, $$
${\cal MP}\left(^{n_1}E_1,\ldots,^{n_m}E_m;F\right)$ becomes a Banach space. \end{definition}

\begin{theorem}\label{CMP}
$P$ is an $(n_1,\dots, n_m)$-homogeneous polynomial if, and only if, there is $T \in L(^{n_1}E_1,\dots, ^{n_m}E_m)$ such that
\begin{equation*}
P(x_1,\dots, x_m) = T(x_1,\overset{n_1}{\dots}, x_1, x_2,\overset{n_2}{\dots}, x_2,\dots, x_{m},\overset{n_m}{\dots}, x_{m}).
\end{equation*}
\end{theorem}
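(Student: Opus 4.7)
The plan is to prove the two implications separately: the ``if'' direction is a direct consequence of multilinearity, while for the ``only if'' direction I will write down an explicit iterated polarization formula for $T$ and verify it has the required properties.

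For $(\Leftarrow)$, given $T\in\mathcal{L}(^{n_1}E_1,\ldots,^{n_m}E_m;F)$, fix $x_1,\ldots,x_{i-1},x_{i+1},\ldots,x_m$. The map
\[
z\longmapsto T(x_1,\ldots,x_1,\ldots,\underbrace{z,\ldots,z}_{n_i},\ldots,x_m,\ldots,x_m)
\]
is the diagonal of the continuous $n_i$-linear map $(z_1,\ldots,z_{n_i})\mapsto T(x_1,\ldots,x_1,\ldots,z_1,\ldots,z_{n_i},\ldots,x_m,\ldots,x_m)$ obtained from $T$ by freezing the other $m-1$ blocks. Since the diagonal of a continuous $n_i$-linear operator on $E_i$ is, by the classical theory, a continuous $n_i$-homogeneous polynomial, $P$ is an $(n_1,\ldots,n_m)$-homogeneous polynomial, with continuity inherited from $T$.

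For $(\Rightarrow)$, I propose to define $T\colon E_1^{n_1}\times\cdots\times E_m^{n_m}\to F$ by the iterated polarization formula
\begin{equation*}
T\bigl((y_j^{(i)})_{i,j}\bigr)\;:=\;\frac{1}{\prod_{i=1}^m 2^{n_i}\, n_i!}\sum_{\varepsilon}\Bigl(\prod_{i,j}\varepsilon_j^{(i)}\Bigr)\, P\Bigl(\sum_{j=1}^{n_1}\varepsilon_j^{(1)} y_j^{(1)},\ldots,\sum_{j=1}^{n_m}\varepsilon_j^{(m)} y_j^{(m)}\Bigr),
\end{equation*}
where $\varepsilon=(\varepsilon_j^{(i)})$ ranges over $\{-1,+1\}^{n_1+\cdots+n_m}$. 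Two checks are required. First, $T$ is $(n_1+\cdots+n_m)$-linear: to see linearity in a fixed variable $y_k^{(l)}$, I would hold every $\varepsilon^{(i)}$ with $i\neq l$ fixed and recognize the resulting inner sum as a constant multiple of the classical polarization of the $n_l$-homogeneous polynomial $z\mapsto P(z^{(1)},\ldots,z^{(l-1)},z,z^{(l+1)},\ldots,z^{(m)})$ (where $z^{(i)}=\sum_j\varepsilon_j^{(i)}y_j^{(i)}$ is fixed for $i\neq l$); this inner expression is symmetric and $n_l$-linear in the $y_j^{(l)}$'s, hence in particular linear in $y_k^{(l)}$. Summing over the other $\varepsilon^{(i)}$'s preserves the linearity. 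Second, on the diagonal $y_j^{(i)}=x_i$ the $n_i$-homogeneity of $P$ in the $i$-th slot gives
\[
P\Bigl(\sum_j\varepsilon_j^{(1)}x_1,\ldots,\sum_j\varepsilon_j^{(m)}x_m\Bigr)=\prod_{i=1}^m\Bigl(\sum_j\varepsilon_j^{(i)}\Bigr)^{n_i}P(x_1,\ldots,x_m),
\]
and the classical identity $\sum_{\varepsilon\in\{-1,+1\}^n}\varepsilon_1\cdots\varepsilon_n(\varepsilon_1+\cdots+\varepsilon_n)^n=2^n\, n!$ applied block by block causes the formula to collapse to $P(x_1,\ldots,x_m)$.

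The main obstacle is the first verification: the bookkeeping needed to regroup the sum so that the classical one-variable polarization can be invoked blockwise. Once this is done, continuity of $T$ follows from the continuity of $P$ together with the standard polarization estimate applied iteratively, yielding $\|T\|\le\|P\|\prod_{i=1}^m n_i^{n_i}/n_i!$; in particular $T\in\mathcal{L}(^{n_1}E_1,\ldots,^{n_m}E_m;F)$.
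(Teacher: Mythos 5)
Your proof is correct, and it takes a genuinely different route from the paper's. The paper proves the nontrivial direction by invoking the coefficient representation of $P$ from \cite[Theorem 2.1]{V17MP} (a sum over index sets $J_1,\dots,J_m$ of products of functionals $\xi_{j}^{(i)}$ with explicitly computed coefficients $b_{\dots}$), and then builds $T$ by decoupling the repeated arguments inside each monomial of that expansion; the multilinear structure of $T$ is then read off from the monomials. You instead define $T$ intrinsically by a blockwise Mazur--Orlicz polarization applied directly to $P$, verify linearity in each slot by freezing the signs in the other blocks and recognizing the inner sum as the classical polarization of the $n_l$-homogeneous map $z\mapsto P(z^{(1)},\dots,z,\dots,z^{(m)})$, and check the diagonal via the identity $\sum_{\varepsilon\in\{-1,1\}^n}\varepsilon_1\cdots\varepsilon_n(\varepsilon_1+\cdots+\varepsilon_n)^n=2^n n!$ applied block by block. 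All three verifications (multilinearity, the diagonal identity, and the bound $\|T\|\le\|P\|\prod_{i=1}^m n_i^{n_i}/n_i!$) are sound. What your approach buys: it is basis-free and self-contained modulo the classical one-variable polarization formula, it avoids any reliance on the representation theorem from \cite{V17MP}, and it produces in one stroke the $(n_1,\dots,n_m)$-symmetric representative together with the norm estimate --- i.e.\ it essentially anticipates the paper's later $(n_1,\dots,n_m)$-Polarization Formula and the two-sided norm inequality between $\|T\|$ and $\|\hat{T}\|$. What the paper's route buys is the explicit coefficient formula for $T$ in terms of the $b_{\dots}$, which is convenient when one wants to see the finite-type structure directly. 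One presentational remark: when you invoke ``the classical identity'', it is worth noting that it is exactly the polarization formula for $t\mapsto t^n$ on $\mathbb{K}$ evaluated at $y_1=\cdots=y_n=1$, so no separate combinatorial lemma is needed.
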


\begin{proof}
''$\Leftarrow$'' is immediate. 

''$\Rightarrow$'' In \cite[Theorem $2.1$]{V17MP} it was shown that any $(n_1,\dots, n_m)$-homogeneous polynomials $P \colon E_1\times \cdots \times E_m \rightarrow F$ can be written in the form
\begin{align*}
&P(x_1,\dots, x_m) =\\
&\sum_{j_1^{(1)},\dots, j_{n_1}^{(1)} \in J_1}\cdots \sum_{j_1^{(m)},\dots, j_{n_m}^{(m)} \in J_m}b_{j_1^{(1)},\dots, j_{n_1}^{(1)},\dots, j_1^{(m)},\dots j_{n_m}^{(m)}}\xi_{j_1^{(1)}}^{(1)}\cdots \xi_{j_{n_1}^{(1)}}^{(1)}(x_1)\cdots \xi_{j_1^{(m)}}^{(m)}\cdots \xi_{j_{n_m}^{(m)}}^{(m)}(x_m).
\end{align*}
where
\begin{equation*}
b_{j_1^{(1)},\dots, j_{n_1}^{(1)},\dots, j_1^{(m)},\dots j_{n_m}^{(m)}} = \frac{1}{2^{n_1+\dots +n_m}n_1!\cdots n_m!}\sum_{\epsilon_i^{(j)} = \pm 1}\left(\prod_{i=1}^{m}\epsilon_1^{(i)}\cdots \epsilon_{n_m}^{(i)}\right)P\left(\sum_{k=1}^{n_1}\epsilon_k^{(1)}e_k^{(1)},\dots, \sum_{k=1}^{n_m}\epsilon_k^{(m)}e_k^{(m)} \right).
\end{equation*}

Now define
\begin{equation*}
T\colon E_1\times \overset{n_1}{\cdots}\times E_1 \times  \cdots \times E_m\overset{n_m}{\cdots}\times E_m \rightarrow F
\end{equation*}
given by,
\begin{align*}
&T\left(x_1^{(1)},\dots, x_{n_1}^{(1)},\dots, x_1^{(m)},\dots, x_{n_m}^{(m)} \right) =\\
&\sum_{j_1^{(1)},\dots, j_{n_1}^{(1)} \in J_1}\cdots \sum_{j_1^{(m)},\dots, j_{n_m}^{(m)} \in J_m}b_{j_1^{(1)},\dots, j_{n_1}^{(1)},\dots, j_1^{(m)},\dots j_{n_m}^{(m)}}\xi_{j_1}^{(1)}\left(x_1^{(1)} \right)\cdots \xi_{j_{n_1}}^{(1)}\left(x_{n_1}^{(1)}\right)\cdots \xi_{j_1}^{(m)}\left(x_1^{(m)} \right)\cdots \xi_{j_{n_m}}^{(m)}\left(x_{n_m}^{(m)}\right).
\end{align*}
It is easy to see that $T$ is an $(n_1+\cdots + n_m)$-linear application and
\begin{equation*}
P(x_1,\dots, x_m) = T\left(x_1,\overset{n_1}{\dots},x_1,\dots, x_m,\overset{n_m}{\dots}, x_m \right).
\end{equation*}
\end{proof}

The next definition presents the concept of $(n_1,\dots, n_m)$-symmetric multilinear applications, which will be sufficient to ensure that each multi-polynomial can be associated with a unique $(n_1,\dots, n_m)$-symmetric multilinear application.

\begin{definition}
Let $T \in \mathcal{L}(^{n_1}E_1,\dots, ^{n_m}E_{n_m})$. $T$ is said $(n_1,\dots, n_m)$-symmetric when
\begin{align*}
T\left(x_{\sigma_1(1)}^{(1)},\dots, x_{\sigma_1(n_1)}^{(1)},\dots, x_{\sigma_m(1)}^{(m)},\dots, x_{\sigma_m(n_m)}^{(m)} \right) = T\left(x_1^{(1)},\dots, x_{n_1}^{(1)},\dots, x_1^{(m)},\dots, x_{n_m}^{(m)} \right),
\end{align*}
for every $\sigma_i \in S_{n_i}$, $i=1,\dots, m$.
\end{definition}

\begin{examples}\label{example_N_simmetrical}
\begin{description}
\item $(1)$ Let $E_1 = \mathbb{R}$ and $E_2 = \mathbb{C}$. Then the $4$-linear application $T\colon E_1 \times E_1 \times E_2 \times E_2 \rightarrow \mathbb{C}$, given by
\begin{equation*}
T(x, y, \alpha, \beta) = xy\alpha \beta
\end{equation*}
is a $(2, 2)$-symmetric application, but it is not symmetric, because $E_1 \neq E_2$.

\item $(2)$ Let $E_1 = E_2 = \mathbb{R}^2$. Then, the $4$-linear application $T\colon E_1\times E_1 \times E_2 \times E_2 \rightarrow \mathbb{R}$ given by
\begin{equation*}
T\left((x, y), (z, w), (a, b), (c, d) \right) = xzbd
\end{equation*}
is a $(2, 2)$-symmetric application, but it is not symmetric, even if $E_1 = E_2$.
\end{description}
\end{examples}

The linear space of all continuous $(n_1,\dots, n_m)$-symmetric applications is denoted by $\mathcal{L}_s^{(n_1,\dots,n_m)}(^{n_1}E_1,\dots, ^{n_m}E_m; F)$.

\begin{lemma}
If $dim(E)>1$, then $\mathcal{L}_s(^{n_1}E,\dots, ^{n_m}E; F) \subsetneq \mathcal{L}_s^{(n_1,\dots, n_m)}(^{n_1}E,\dots, ^{n_m}E; F)$.
\end{lemma}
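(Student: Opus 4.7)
The containment $\mathcal{L}_s({}^{n_1}E, \ldots, {}^{n_m}E; F) \subseteq \mathcal{L}_s^{(n_1, \ldots, n_m)}({}^{n_1}E, \ldots, {}^{n_m}E; F)$ is free: a multilinear map invariant under the whole symmetric group $S_{n_1+\cdots+n_m}$ is in particular invariant under the product subgroup $S_{n_1}\times\cdots\times S_{n_m}$ acting blockwise. So I only need to exhibit a single element of the bigger space that fails full symmetry, under the implicit assumption $m\ge 2$ (for $m=1$ the two notions coincide and the strict inclusion is false) and $F\neq \{0\}$.

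Since $\dim E>1$, I would first produce biorthogonal pairs: pick linearly independent $\varphi,\psi\in E^{*}$ (available by the standard duality argument, since $\dim E\ge 2$ forces $\dim E^{*}\ge 2$) and then use the fact that the map $E\to\mathbb{K}^{2}$, $x\mapsto(\varphi(x),\psi(x))$, is surjective (otherwise its image would lie in a hyperplane, producing a nontrivial linear relation between $\varphi$ and $\psi$). This yields $a,b\in E$ with $\varphi(a)=\psi(b)=1$ and $\varphi(b)=\psi(a)=0$. Fixing any $y\in F\setminus\{0\}$, I define
\[
T\bigl(x_1^{(1)},\ldots,x_{n_1}^{(1)},\ldots,x_1^{(m)},\ldots,x_{n_m}^{(m)}\bigr)
=\Bigl(\prod_{j=1}^{n_1}\varphi\bigl(x_j^{(1)}\bigr)\Bigr)\prod_{i=2}^{m}\prod_{j=1}^{n_i}\psi\bigl(x_j^{(i)}\bigr)\cdot y.
\]
Continuity and multilinearity are immediate from the continuity and linearity of $\varphi$ and $\psi$, and $(n_1,\ldots,n_m)$-symmetry is clear because within each block the value is a product of scalars, which commutes under any permutation in $S_{n_i}$.

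To see $T$ is not fully symmetric, I would evaluate it on the tuple that has $a$ in every slot of the first block and $b$ in every slot of the remaining blocks: the result is $\varphi(a)^{n_1}\psi(b)^{n_2+\cdots+n_m}\cdot y=y\neq 0$. Now I transpose the first coordinate of block $1$ with the first coordinate of block $2$; this replaces the factor $\varphi(a)=1$ by $\varphi(b)=0$, so $T$ vanishes on the permuted tuple. Since the two evaluations disagree, $T$ is not $S_{n_1+\cdots+n_m}$-invariant, and the inclusion is strict. The only non-routine step is securing the biorthogonal vectors $a,b$, but this reduces to linear independence in $E^{*}$ and is the standard two-dimensional duality argument sketched above.
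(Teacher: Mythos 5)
Your proof is correct and is essentially the paper's own argument carried out in full generality: the paper merely points to item $(2)$ of Examples~\ref{example_N_simmetrical} ($E=\mathbb{R}^2$, $T((x,y),(z,w),(a,b),(c,d))=xzbd$), which is exactly the coordinate-functional instance of your construction with $\varphi,\psi$ the two coordinate projections. Your version has the added merit of treating arbitrary $E$ with $\dim E>1$ and arbitrary $(n_1,\dots,n_m)$ via the Hahn--Banach/biorthogonality step, and of making explicit the hypotheses $m\ge 2$ and $F\neq\{0\}$, without which the strict inclusion fails and which the paper leaves implicit.
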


\begin{proof}
The inclusion is immediate and item $(2)$ of the Example $\ref{example_N_simmetrical}$ shows that the inclusion is proper.  
\end{proof}

\begin{remark}
	If $dim(E)=1$, then $\mathcal{L}_s(^{n_1}E,\dots, ^{n_m}E; F) = \mathcal{L}_s^{(n_1,\dots, n_m)}(^{n_1}E,\dots, ^{n_m}E; F)$.
\end{remark}

The next result shows that every $(n_1+\cdots +n_m)$-linear application is $(n_1,\dots, n_m)$-symmetrizable.

\begin{lemma}
Let $T \in  \mathcal{L}(^{n_1}E_1,\dots, ^{n_m}E_m; F)$. Then, the application 
\begin{align*}
&T_s^{(n_1,\dots, n_m)}\left(x_1^{(1)},\dots, x_{n_1}^{(1)},\dots, x_{1}^{(m)},\dots, x_{n_m}^{(m)}\right)\\
&= \frac{1}{n_1!\cdots n_m!}\sum_{\begin{array}{cc}
\sigma_i \in S_{n_i}\\
i=1,\dots, m
\end{array}} T_{\sigma_1.\dots, \sigma_m}\left(x_1^{(1)},\dots, x_{n_1}^{(1)},\dots, x_{1}^{(m)},\dots, x_{n_m}^{(m)} \right),
\end{align*}
where
\begin{align*}
T_{\sigma_1.\dots, \sigma_m}\left(x_1^{(1)},\dots, x_{n_1}^{(1)},\dots, x_{1}^{(m)},\dots, x_{n_m}^{(m)} \right) = T\left(x_{\sigma_1(1)}^{(1)},\dots, x_{\sigma_1(n_1)}^{(1)},\dots, x_{\sigma_m(1)}^{(1)},\dots, x_{\sigma_m(n_m)}^{(m)} \right).
\end{align*}
is $(n_1,\dots, n_m)$-symmetric. This application is called $(n_1,\dots, n_m)$-simmetrization of $T$.
\end{lemma}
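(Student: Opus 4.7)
The plan is to verify the symmetry condition by a change-of-variable argument in each of the $m$ permutation groups independently. The symmetrization $T_s^{(n_1,\dots,n_m)}$ averages $T$ over the product group $S_{n_1}\times\cdots\times S_{n_m}$, so the proof is essentially a group-invariance computation — the same trick that produces the usual (fully) symmetric polarization.

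First, I would fix arbitrary permutations $\tau_i\in S_{n_i}$ for $i=1,\dots,m$ and write out what it means to evaluate $T_s^{(n_1,\dots,n_m)}$ at the $\tau$-shuffled tuple $\bigl(x^{(1)}_{\tau_1(1)},\dots,x^{(1)}_{\tau_1(n_1)},\dots,x^{(m)}_{\tau_m(1)},\dots,x^{(m)}_{\tau_m(n_m)}\bigr)$. Expanding by the definition produces
\begin{equation*}
\frac{1}{n_1!\cdots n_m!}\sum_{\substack{\sigma_i\in S_{n_i}\\ i=1,\dots,m}} T\Bigl(x^{(1)}_{\sigma_1(\tau_1(1))},\dots,x^{(1)}_{\sigma_1(\tau_1(n_1))},\dots,x^{(m)}_{\sigma_m(\tau_m(1))},\dots,x^{(m)}_{\sigma_m(\tau_m(n_m))}\Bigr).
\end{equation*}

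Next, I would perform the substitution $\sigma_i\mapsto \sigma_i\circ\tau_i^{-1}$ in each group independently. Since right multiplication by $\tau_i$ (equivalently, left multiplication by $\tau_i^{-1}$) is a bijection of $S_{n_i}$, the summation index still ranges over all of $S_{n_1}\times\cdots\times S_{n_m}$, and each occurrence of $\sigma_i(\tau_i(k))$ becomes simply $\sigma_i(k)$. Thus the displayed sum reduces to the original definition of $T_s^{(n_1,\dots,n_m)}\bigl(x^{(1)}_1,\dots,x^{(1)}_{n_1},\dots,x^{(m)}_1,\dots,x^{(m)}_{n_m}\bigr)$, which is exactly the $(n_1,\dots,n_m)$-symmetry condition of the preceding definition.

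There is no real obstacle: the multilinearity of $T$ plays no role here (it is needed separately to see that $T_s^{(n_1,\dots,n_m)}$ is itself multilinear, but that is immediate since each $T_{\sigma_1,\dots,\sigma_m}$ is a composition of $T$ with a coordinate permutation and the averaging is a finite convex combination). The only minor bookkeeping point is to make the substitution in the $m$ factors of the product group simultaneously but independently, so I would be careful to emphasize that the re-indexing in the $i$-th block does not affect the others, which is why a factorized group action — one permutation per block — suffices.
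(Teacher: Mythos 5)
Your argument is correct, and it is worth noting that the paper itself states this lemma \emph{without} any proof, so there is nothing to compare against: you are supplying the standard group-averaging argument that the authors leave implicit. The re-indexing over each factor $S_{n_i}$ of the product group, done independently block by block, is exactly the right mechanism, and your remark that multilinearity and symmetry are separate (and both easy) issues is apt.

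One small bookkeeping point. If the $k$-th entry of the shuffled tuple in block $i$ is $x^{(i)}_{\tau_i(k)}$, then expanding the definition of $T_s^{(n_1,\dots,n_m)}$ at that tuple produces entries $x^{(i)}_{\tau_i(\sigma_i(k))}$, i.e.\ the composition $\tau_i\circ\sigma_i$, rather than the $\sigma_i\circ\tau_i$ you wrote. This changes nothing essential: you then re-index by the \emph{left} translation $\sigma_i\mapsto\tau_i^{-1}\circ\sigma_i$ instead of the right translation $\sigma_i\mapsto\sigma_i\circ\tau_i^{-1}$, and since both are bijections of $S_{n_i}$ the sum collapses to the original expression either way. (Relatedly, your parenthetical ``right multiplication by $\tau_i$ (equivalently, left multiplication by $\tau_i^{-1}$)'' conflates the two translations; they are different maps, though each is a bijection.) These are conventions, not gaps; the proof stands.
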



The next result, which is a consequence of the Polarization Formula for symmetric multilinear applications, will give us a formula that will be extremely important in proving the results that follow.

\begin{theorem}{$($\textbf{$(n_1,\dots, n_m)$-Polarization Formula}$)$}
Let $E_1,\dots, E_m, F$ be Banach space. If $T \in \mathcal{L}_s^{(n_1,\dots, n_m)}(^{n_1}E_1,\dots, ^{n_m}E_m; F)$, then
\begin{align*}
&T\left(x_1^{(1)},\dots,x_{n_1}^{(1)}, x_{1}^{(2)},\dots, x_{n_2}^{(2)},\dots,  x_{1}^{(m)},\dots, x_{n_m}^{(m)}\right)\\
=&\mathcal{A}_{n_1,\dots,n_m}\prod_{k=1}^{m}\left(\sum_{\epsilon_i^{(k)} = \pm 1}\epsilon_1^{(k)}\cdots \epsilon_{n_k}^{(k)}\right) T\left(\left(x_0^{(1)} + \sum_{i=1}^{n_1}\epsilon_i^{(1)} x_i^{(1)}\right)^{n_1},\dots, \left(x_0^{(m)} + \sum_{i=1}^{n_m}\epsilon_i^{(m)} x_i^{(m)}\right)^{n_m} \right)
\end{align*}
for every $x_0^{(j)}, x_1^{(j)},\dots, x_{n_j}^{(j)} \in E_j$, $j=1,\dots,m$, where $\mathcal{A}_{n_1,\dots,n_m}=\displaystyle\frac{1}{2^{n_1+\cdots + n_m}n_1! \cdots  n_m!}$.
\end{theorem}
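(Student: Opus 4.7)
The key observation is that being $(n_1,\dots,n_m)$-symmetric is exactly the statement that $T$ is separately symmetric in each of the $m$ blocks of variables $(x_1^{(k)},\dots,x_{n_k}^{(k)})$. So the strategy is to apply the classical (scalar) Polarization Formula one block at a time and multiply the resulting identities together.

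First I would fix arbitrary vectors $x_0^{(k)}, x_1^{(k)},\dots,x_{n_k}^{(k)} \in E_k$ for $k=2,\dots,m$ and look at the map
\[
S_1\colon E_1^{n_1}\to F,\qquad S_1(y_1,\dots,y_{n_1}) := T\bigl(y_1,\dots,y_{n_1},\,x_1^{(2)},\dots,x_{n_m}^{(m)}\bigr).
\]
By the $(n_1,\dots,n_m)$-symmetry of $T$, this $S_1$ is a symmetric $n_1$-linear application. The classical Polarization Formula therefore gives
\[
T\bigl(x_1^{(1)},\dots,x_{n_1}^{(1)},x_1^{(2)},\dots,x_{n_m}^{(m)}\bigr) = \frac{1}{2^{n_1}n_1!}\sum_{\epsilon_i^{(1)}=\pm 1}\epsilon_1^{(1)}\cdots\epsilon_{n_1}^{(1)}\,T\!\left(\Bigl(x_0^{(1)}+\sum_{i=1}^{n_1}\epsilon_i^{(1)}x_i^{(1)}\Bigr)^{n_1}\!,\,x_1^{(2)},\dots,x_{n_m}^{(m)}\right).
\]

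Next I would repeat the same move in the second block. For each fixed choice of signs $\epsilon_i^{(1)}$ the vector $v_1 := x_0^{(1)}+\sum_i\epsilon_i^{(1)}x_i^{(1)}$ is just some element of $E_1$, so the map $S_2\colon E_2^{n_2}\to F$ given by $S_2(y_1,\dots,y_{n_2}) := T(v_1,\dots,v_1,y_1,\dots,y_{n_2},x_1^{(3)},\dots,x_{n_m}^{(m)})$ is again a symmetric $n_2$-linear application by the $(n_1,\dots,n_m)$-symmetry hypothesis. Applying the classical Polarization Formula to $S_2$ produces another factor $\frac{1}{2^{n_2}n_2!}\sum_{\epsilon_i^{(2)}=\pm 1}\epsilon_1^{(2)}\cdots\epsilon_{n_2}^{(2)}$ and pulls the second block into the diagonal form $(x_0^{(2)}+\sum\epsilon_i^{(2)}x_i^{(2)})^{n_2}$. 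Iterating this through blocks $k=3,\dots,m$ and collecting the $m$ scalar factors yields the announced constant $\mathcal{A}_{n_1,\dots,n_m}=\frac{1}{2^{n_1+\cdots+n_m}n_1!\cdots n_m!}$ together with the product of $m$ sign-sums.

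The only real points to check carefully are (i) that after peeling off block $k$ the partially diagonalized map is still symmetric in the variables of block $k+1$ (which is immediate since the diagonalization in earlier blocks only freezes vectors in $E_1,\dots,E_k$ and $(n_1,\dots,n_m)$-symmetry is separate in each block), and (ii) that the scalar factors and sign sums from the $m$ successive applications can be interchanged with the multilinear dependence in the remaining blocks, which is just multilinearity. No single step is an obstacle; the proof is essentially an $m$-fold induction on the classical Polarization Formula, and the whole statement can be written as a single induction on $m$ with the base case $m=1$ being the classical result.
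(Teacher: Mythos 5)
Your proof is correct and follows exactly the route the paper intends: the theorem is stated there without proof, merely as ``a consequence of the Polarization Formula for symmetric multilinear applications,'' and your block-by-block iteration of the classical formula (justified by the observation that $(n_1,\dots,n_m)$-symmetry is precisely separate symmetry within each block) is the natural way to make that one-line remark precise. The two points you flag --- persistence of symmetry in the remaining blocks after freezing earlier ones, and interchanging the sign-sums with multilinearity --- are indeed the only things to check, and both go through as you say.
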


An immediate consequence of the $(n_1,\dots, n_m)$-Polarization Formula, is the next result.

\begin{corollary}\label{PI}
\begin{description}
\item $(a)$ Let $E, F$ be Banach space and $T, R \in  \mathcal{L}_s^{(n_1,\dots, n_m)}(^{n_1}E_1,\dots, ^{n_m}E_m; F)$. If $T(x_1,\overset{n_1}{\dots}, x_{1},\dots, x_{m},\overset{n_m}{\dots}, x_m) = R(x_1,\overset{n_1}{\dots}, x_{1},\dots, x_{m},\overset{n_m}{\dots}, x_m)$, then $$T(x_1^{(1)},\dots, x_{n_1}^{(1)},\dots, x_{1}^{(m)},\dots, x_{n_m}^{(m)}) = R(x_1^{(1)},\dots, x_{n_1}^{(1)},\dots, x_{1}^{(m)},\dots, x_{n_m}^{(m)}),$$ for every $ x_1^{(i)},\dots, x_{n_i}^{(i)} \in E_i$, $i=1,\dots,m$.

\item $(b)$ Let $T \in  \mathcal{L}(^{n_1}E_1,\dots, ^{n_m}E_m; F)$. Then
\begin{equation*}
T(x_1,\overset{n_1}{\dots}, x_{1},\dots, x_{m},\overset{n_m}{\dots}, x_{m}) = T_s^{(n_1,\dots, n_m)}(x_1,\overset{n_1}{\dots}, x_{1},\dots, x_{m},\overset{n_m}{\dots}, x_{m}).
\end{equation*}
\end{description}

\end{corollary}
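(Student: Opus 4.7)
My plan for part $(a)$ is to apply the $(n_1,\dots,n_m)$-Polarization Formula to both $T$ and $R$: since the hypothesis already places both in $\mathcal{L}_s^{(n_1,\dots, n_m)}(^{n_1}E_1,\dots, ^{n_m}E_m; F)$, the formula is available for each of them. The key observation is that every summand on the right-hand side of the formula is a scalar multiple of a diagonal value, namely $T\bigl((y^{(1)})^{n_1},\dots,(y^{(m)})^{n_m}\bigr)$ (respectively $R$ of the same) with $y^{(j)}=x_0^{(j)}+\sum_{i=1}^{n_j}\epsilon_i^{(j)} x_i^{(j)}$. By hypothesis $T$ and $R$ agree on exactly such diagonal inputs, so the two polarization expansions coincide summand-by-summand, and the desired equality at an arbitrary argument $(x_1^{(1)},\dots,x_{n_1}^{(1)},\dots,x_1^{(m)},\dots,x_{n_m}^{(m)})$ follows immediately.

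For part $(b)$ I would bypass polarization entirely and proceed directly from the definition of the $(n_1,\dots,n_m)$-symmetrization given in the preceding lemma. Fixing the diagonal argument $(x_1,\overset{n_1}{\dots},x_1,\dots,x_m,\overset{n_m}{\dots},x_m)$, each of the terms $T_{\sigma_1,\dots,\sigma_m}$ in the defining average equals $T$ itself, because permuting identical entries within each block leaves the tuple unchanged. Averaging $n_1!\cdots n_m!$ identical copies of $T(x_1,\overset{n_1}{\dots},x_1,\dots,x_m,\overset{n_m}{\dots},x_m)$ yields $T$ on the diagonal, which is precisely the claim. An alternative route would be to first apply $(a)$ to $T_s^{(n_1,\dots, n_m)}$ (which is $(n_1,\dots,n_m)$-symmetric by the symmetrization lemma) and $T$ itself, but this is unnecessarily indirect.

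I do not foresee any substantive obstacle: $(a)$ is essentially a tautology once the right-hand side of the $(n_1,\dots,n_m)$-Polarization Formula is parsed as a linear combination of diagonal values, and $(b)$ is purely combinatorial. The only care required is notational bookkeeping---keeping the signs $\epsilon_i^{(j)}$ and the indices $x_i^{(j)}$ straight across the $m$ blocks, and recognizing the repeated-argument shorthand $\bigl((\cdot)^{n_j}\bigr)$---so that the application of the polarization identity is unambiguous.
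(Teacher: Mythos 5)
Your proposal is correct. For part $(a)$ it is exactly the intended argument: the paper offers no written proof beyond the remark that the corollary is an immediate consequence of the $(n_1,\dots,n_m)$-Polarization Formula, and your reading of the right-hand side of that formula as a linear combination of diagonal values $T\bigl((y^{(1)})^{n_1},\dots,(y^{(m)})^{n_m}\bigr)$, on which $T$ and $R$ agree by hypothesis, is precisely what makes it immediate. For part $(b)$ your direct computation from the definition of $T_s^{(n_1,\dots,n_m)}$ is in fact the \emph{only} viable route: the Polarization Formula applies to $(n_1,\dots,n_m)$-symmetric maps, and the $T$ of part $(b)$ is an arbitrary element of $\mathcal{L}(^{n_1}E_1,\dots,^{n_m}E_m;F)$, so the paper's blanket attribution to polarization is slightly loose there and your observation that each $T_{\sigma_1,\dots,\sigma_m}$ coincides with $T$ on diagonal tuples (so the average of $n_1!\cdots n_m!$ identical terms returns $T$) is the correct justification.
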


The next result states that for each $(n_1,\dots, n_m)$-homogeneous polynomial, there is a unique $(n_1,\dots, n_m)$-symmetric $(n_1+\dots+ n_m)$-linear application associated to it.

\begin{proposition}\label{UNIC}
Let $P \in \mathcal{MP}(^{n_1}E_1,\dots, ^{n_m}E_m; F)$. Then there is a unique $$\check{P} \in \mathcal{L}_s^{(n_1,\dots, n_m)}(^{n_1}E_1,\dots. ^{n_m}E_m; F)$$ such that
\begin{equation*}
P(x_1,\dots, x_m) = \check{P}(x_1,\overset{n_1}{\dots}, x_{1},\dots, x_{m},\overset{n_m}{\dots}, x_{m}).
\end{equation*}
\end{proposition}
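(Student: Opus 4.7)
The plan is to obtain existence by applying the symmetrization procedure to any multilinear map associated to $P$ via Theorem \ref{CMP}, and to obtain uniqueness directly from Corollary \ref{PI}(a). Both tools have already been set up, so the proof will essentially be an assembly of results.

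For existence, I would first invoke Theorem \ref{CMP} to produce $T \in \mathcal{L}(^{n_1}E_1,\dots,^{n_m}E_m;F)$ satisfying
\begin{equation*}
P(x_1,\dots,x_m) = T(x_1,\overset{n_1}{\dots},x_1,\dots,x_m,\overset{n_m}{\dots},x_m).
\end{equation*}
Next, I would set $\check{P} := T_s^{(n_1,\dots,n_m)}$, which, by the symmetrization lemma, lies in $\mathcal{L}_s^{(n_1,\dots,n_m)}(^{n_1}E_1,\dots,^{n_m}E_m;F)$. Finally, Corollary \ref{PI}(b) gives
\begin{equation*}
T(x_1,\overset{n_1}{\dots},x_1,\dots,x_m,\overset{n_m}{\dots},x_m) = T_s^{(n_1,\dots,n_m)}(x_1,\overset{n_1}{\dots},x_1,\dots,x_m,\overset{n_m}{\dots},x_m),
\end{equation*}
so $\check{P}$ restricts on the diagonal to $P$, as required.

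For uniqueness, suppose $\check{P}_1, \check{P}_2 \in \mathcal{L}_s^{(n_1,\dots,n_m)}(^{n_1}E_1,\dots,^{n_m}E_m;F)$ both represent $P$ on the diagonal. Then they agree on the diagonal with each other, i.e.\ $\check{P}_1(x_1,\overset{n_1}{\dots},x_1,\dots,x_m,\overset{n_m}{\dots},x_m) = \check{P}_2(x_1,\overset{n_1}{\dots},x_1,\dots,x_m,\overset{n_m}{\dots},x_m)$ for every $x_i \in E_i$. Applying Corollary \ref{PI}(a) — which is precisely the statement that two $(n_1,\dots,n_m)$-symmetric multilinear forms coinciding on the diagonal must coincide everywhere — we conclude $\check{P}_1 = \check{P}_2$.

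There is essentially no obstacle: the genuine work has already been done in proving the $(n_1,\dots,n_m)$-Polarization Formula and Corollary \ref{PI}, which together replace the role of the classical polarization identity in the symmetric multilinear theory. The only thing to be slightly careful about is making sure the symmetrization used here is the one adapted to the block structure $(n_1,\dots,n_m)$ rather than the full symmetric group on $n_1+\cdots+n_m$ variables, since otherwise one would recover the standard (fully symmetric) associated multilinear form, which may fail to exist when the spaces $E_i$ differ.
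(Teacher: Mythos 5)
Your proof is correct and follows essentially the same route as the paper: existence via Theorem \ref{CMP} together with the $(n_1,\dots,n_m)$-symmetrization and Corollary \ref{PI}(b), and uniqueness via Corollary \ref{PI}(a). No gaps.
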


\begin{proof}
Let $P \in \mathcal{MP}(^{n_1}E_1,\dots, ^{n_m}E_m; F)$. By Theorem \ref{CMP} and Corollary \ref{PI} item $(b)$, there is $T \in \mathcal{L}(^{n_1}E_1,\dots, ^{n_m}E_m; F)$ such that
\begin{align*}
P(x_1,\dots, x_m) &= T(x_1,\overset{n_1}{\dots}, x_{1},\dots, x_{m},\overset{n_m}{\dots}, x_{m})\\
&= T_s^{(n_1,\dots, n_m)}(x_1,\overset{n_1}{\dots}, x_{1},\dots, x_{m},\overset{n_m}{\dots}, x_{m}).
\end{align*}
This shows the existence. It remains to prove the uniqueness. Suppose that there exists $R \in \mathcal{L}_s^{(n_1,\dots, n_m)}(^{n_1}E_1,\dots. ^{n_m}E_m; F)$, such that
\begin{equation*}
P(x_1,\dots, x_m) = R(x_1,\overset{n_1}{\dots}, x_{1},\dots, x_{m},\overset{n_m}{\dots}, x_{m}).
\end{equation*}
Then
\begin{equation*}
\check{P}(x_1,\overset{n_1}{\dots}, x_{1},\dots, x_{m},\overset{n_m}{\dots}, x_{m}) = R(x_1,\overset{n_1}{\dots}, x_{1},\dots, x_{m},\overset{n_m}{\dots}, x_{m}),
\end{equation*}
and the result follows from Proposition \ref{PI} item $(a)$.
\end{proof}

It is known that each multilinear application can be associated with a homogeneous polynomial. From now on, we can associate, in an analogous way, each multilinear application $T \in \mathcal{L}(^{n_1}E_1,\dots, ^{n_m}E_m; F)$  to an $(n_1,\dots, n_m)$-homogeneous polynomial, which will be denoted by $\hat{T}$ and defined as follows: 
\begin{equation*}
 \hat{T}(x_1,\dots, x_m):=T(x_1,\overset{n_1}{\dots}, x_1, \dots, x_m, \overset{n_m}{\dots}, x_m).
\end{equation*}


\begin{proposition}
Let $E_1,\dots, E_m, F$ be Banach spaces. The application
\begin{equation*}
\Phi : \mathcal{L}_s^{(n_1,\dots, n_m)}(^{n_1}E_1,\dots, ^{n_m}E_m; F) \rightarrow \mathcal{MP}(^{n_1}E_1,\dots, ^{n_m}E_m; F)
\end{equation*}
defined by $\Phi(T) = \hat{T}$ is an isomorphism. Furthermore,
\begin{equation*}
\|\hat{T}\| \le \|T\| \le \frac{n_1^{n_1}\cdots n_m^{n_m}}{n_1!\cdots n_m!}\|\hat{T}\|.
\end{equation*}
\end{proposition}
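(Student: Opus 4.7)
The plan is to verify the three standard pieces in turn: that $\Phi$ is a well-defined linear map, that it is a bijection, and that the two norm estimates hold; these last norm estimates are really what establish that $\Phi$ is a \emph{topological} isomorphism.

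Linearity of $\Phi$ is immediate from the definition $\hat T(x_1,\dots,x_m)=T(x_1,\overset{n_1}{\dots},x_1,\dots,x_m,\overset{n_m}{\dots},x_m)$. For bijectivity I would argue as follows. Injectivity: if $\Phi(T)=0$ for some $T\in\mathcal{L}_s^{(n_1,\dots,n_m)}$, then $T$ and the zero map (which is trivially $(n_1,\dots,n_m)$-symmetric) agree on the diagonal, and Corollary \ref{PI}$(a)$ forces $T=0$. Surjectivity: given $P\in\mathcal{MP}(^{n_1}E_1,\dots,^{n_m}E_m;F)$, Proposition \ref{UNIC} produces $\check P\in\mathcal{L}_s^{(n_1,\dots,n_m)}$ with $\Phi(\check P)=\hat{\check P}=P$.

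The lower estimate $\|\hat T\|\le\|T\|$ is just the supremum comparison: for $x_j\in B_{E_j}$,
\begin{equation*}
\|\hat T(x_1,\dots,x_m)\|=\|T(x_1,\overset{n_1}{\dots},x_1,\dots,x_m,\overset{n_m}{\dots},x_m)\|\le\|T\|\,\|x_1\|^{n_1}\cdots\|x_m\|^{n_m}\le\|T\|.
\end{equation*}

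The main obstacle, and the one that requires real input, is the upper estimate $\|T\|\le\frac{n_1^{n_1}\cdots n_m^{n_m}}{n_1!\cdots n_m!}\|\hat T\|$. Here I would invoke the $(n_1,\dots,n_m)$-Polarization Formula with the choice $x_0^{(j)}=0$ for every $j$. Applied to unit-ball inputs $x_i^{(j)}\in B_{E_j}$, each factor $y_j:=\sum_{i=1}^{n_j}\varepsilon_i^{(j)}x_i^{(j)}$ satisfies $\|y_j\|\le n_j$, and each evaluation $T(y_1,\overset{n_1}{\dots},y_1,\dots,y_m,\overset{n_m}{\dots},y_m)=\hat T(y_1,\dots,y_m)$ is bounded by $\|\hat T\|\,n_1^{n_1}\cdots n_m^{n_m}$. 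The nested sum over signs $\varepsilon_i^{(k)}=\pm 1$ contributes at most $\prod_{k=1}^m 2^{n_k}=2^{n_1+\cdots+n_m}$ such terms, so multiplying by the prefactor $\mathcal{A}_{n_1,\dots,n_m}=\frac{1}{2^{n_1+\cdots+n_m}n_1!\cdots n_m!}$ the powers of $2$ cancel and we obtain exactly the claimed bound. Taking the supremum over unit-ball inputs then finishes the proof.
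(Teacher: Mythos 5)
Your proof is correct and follows the same route the paper takes: the paper simply asserts that $\Phi$ is an isomorphism (which you justify via Corollary \ref{PI}$(a)$ and Proposition \ref{UNIC}), proves $\|\hat T\|\le\|T\|$ by the identical supremum comparison, and declares the upper bound ``an immediate consequence of the $(n_1,\dots,n_m)$-Polarization Formula'' --- exactly the computation you carry out explicitly with $x_0^{(j)}=0$, the bound $\|y_j\|\le n_j$, and the cancellation of the $2^{n_1+\cdots+n_m}$ factors. No discrepancies to report.
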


\begin{proof}
It is not difficult to see that $\Phi$ is a isomorphism. 

Let $T \in \mathcal{L}_s^{(n_1,\dots, n_m)}(^{n_1}E_1,\dots, ^{n_m}E_m; F)$. Then
\begin{equation*}
\|\hat{T}(x_1,\dots, x_m)\| = \|T(x_1,\overset{n_1}{\dots}, x_1,\dots, x_m,\overset{n_m}{\dots}, x_m))\| \le \|T\| \|x_1\|^{n_1}\cdots \|x_m\|^{n_m}.
\end{equation*}
Thus, $\|\hat{T}\| \le \|T\|$. The other inequality is an immediate consequence of the $(n_1, \dots, n_m)$-Polarization Formula.

\end{proof}

\begin{corollary}
Let $P \in \mathcal{MP}(^{n_1}E_1.\dots, ^{n_m}E_m; F)$. So,
\begin{equation*}
\|P\| \le \|\check{P}\| \le \frac{n_1^{n_1}\cdots n_m^{n_m}}{n_1!\cdots n_m!}\|P\|.
\end{equation*}
\end{corollary}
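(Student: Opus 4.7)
The plan is to derive this corollary as an immediate translation of the norm estimates from the preceding proposition through the isomorphism $\Phi$. By Proposition \ref{UNIC}, $\check{P}$ is the unique element of $\mathcal{L}_s^{(n_1,\dots,n_m)}(^{n_1}E_1,\dots,^{n_m}E_m;F)$ with the property that
\begin{equation*}
\check{P}(x_1,\overset{n_1}{\dots}, x_{1},\dots, x_{m},\overset{n_m}{\dots}, x_{m}) = P(x_1,\dots, x_m),
\end{equation*}
which in the notation introduced just before the preceding proposition means precisely $\hat{\check{P}} = P$, i.e.\ $\Phi(\check{P}) = P$.

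First I would substitute $T = \check{P}$ into the inequality
\begin{equation*}
\|\hat{T}\| \le \|T\| \le \frac{n_1^{n_1}\cdots n_m^{n_m}}{n_1!\cdots n_m!}\|\hat{T}\|
\end{equation*}
established in the preceding proposition. Since $\hat{\check{P}} = P$, the left and right outer terms become $\|P\|$ and $\frac{n_1^{n_1}\cdots n_m^{n_m}}{n_1!\cdots n_m!}\|P\|$ respectively, while the middle term is $\|\check{P}\|$. This yields exactly the claimed double inequality with no further computation.

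There is no real obstacle here: the corollary is purely a restatement of the previous proposition's norm bound under the isomorphism $\Phi$, once one recognises that $P$ is the image of $\check{P}$ under $\Phi$. The only thing to verify, which is already guaranteed by Proposition \ref{UNIC} together with part $(b)$ of Corollary \ref{PI}, is that such a symmetric associate $\check{P}$ exists and lies in the domain of $\Phi$; both facts are in hand.
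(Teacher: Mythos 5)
Your proof is correct and is exactly the argument the paper intends: the corollary is stated without proof as an immediate consequence of the preceding proposition, obtained by taking $T = \check{P}$ and using $\hat{\check{P}} = P$ from Proposition \ref{UNIC}. Nothing further is needed.
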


In the next theorem, we will use the following notation	$$\mathcal{MP}^{(n_1,\dots, n_m)}:=\mathcal{MP}(^{n_1}(\cdot).\dots, ^{n_m}(\cdot);(\cdot)).$$

\begin{theorem}
Let $\mathcal{M}$ be an $(n_1+\cdots + n_m)$-linear applications ideal. Define
\begin{equation*}
\mathcal{MP}_{\mathcal{M}}:= \{P \in \mathcal{MP}^{(n_1,\dots, n_m)}\text{; } \check{P} \in \mathcal{M} \}.
\end{equation*}
Then, $\mathcal{MP}_{\mathcal{M}}$ is an $(n_1,\dots, n_m)$-homogeneous polynomials ideal. If $\left(\mathcal{M}, \|\cdot\|_{\mathcal{M}} \right)$ is a normed $(n_1+\cdots + n_m)$-linear applications ideal, then $\left(\mathcal{MP}_{\mathcal{M}}, \|\cdot\|_{\mathcal{MP}_{\mathcal{M}}} \right)$ is a normed  $(n_1,\dots, n_m)$-homogeneous polynomials ideal, where 
\begin{equation*}
\|P\|_{\mathcal{MP}_{\mathcal{M}}} := \|\check{P} \|_{\mathcal{M}}.
\end{equation*} 

\end{theorem}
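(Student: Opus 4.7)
The plan is to transfer everything through the isomorphism $P \leftrightarrow \check{P}$ supplied by Proposition \ref{UNIC}. The crucial structural identity, from which every ideal axiom will follow, is

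\[
\widecheck{v\circ P\circ(u_1,\dots,u_m)} \;=\; v\circ\check{P}\circ\bigl(u_1,\overset{n_1}{\dots},u_1,\dots,u_m,\overset{n_m}{\dots},u_m\bigr),
\]

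where $u_i\in\mathcal{L}(G_i;E_i)$ and $v\in\mathcal{L}(F;H)$. To prove it, I would first note that the map on the right is $(n_1,\dots,n_m)$-symmetric (since $\check{P}$ is, and each block of $u_i$'s is constant across the $n_i$ slots), and then observe that evaluating it on the diagonal $(x_1,\overset{n_1}{\dots},x_1,\dots,x_m,\overset{n_m}{\dots},x_m)$ returns $v(P(u_1 x_1,\dots,u_m x_m))$; uniqueness in Proposition \ref{UNIC} then forces the identity.

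With this in hand the ideal axioms are routine. First I would check linearity of $\mathcal{MP}_{\mathcal{M}}$: the maps $P\mapsto\check{P}$ is linear (again by uniqueness applied to $\check{P+\lambda Q}$ and $\check{P}+\lambda\check{Q}$), and $\mathcal{M}$ is a linear subspace, so $\mathcal{MP}_{\mathcal{M}}$ is too. Next I would verify the finite-type condition: for $\phi_i\in E_i^{\ast}$ and $y\in F$, the polynomial $P(x_1,\dots,x_m)=\phi_1(x_1)^{n_1}\cdots\phi_m(x_m)^{n_m}\,y$ has $\check{P}(x_1^{(1)},\dots,x_{n_m}^{(m)})=\prod_i\prod_k\phi_i(x_k^{(i)})\,y$, which is of finite type in $\mathcal{L}(^{n_1}E_1,\dots,^{n_m}E_m;F)$ and hence lies in $\mathcal{M}$. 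Finally, the composition axiom: if $P\in\mathcal{MP}_{\mathcal{M}}$, then $\check{P}\in\mathcal{M}$, and the displayed identity above together with the ideal property of $\mathcal{M}$ gives $\widecheck{v\circ P\circ(u_1,\dots,u_m)}\in\mathcal{M}$, i.e. $v\circ P\circ(u_1,\dots,u_m)\in\mathcal{MP}_{\mathcal{M}}$.

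For the normed part, set $\|P\|_{\mathcal{MP}_{\mathcal{M}}}:=\|\check{P}\|_{\mathcal{M}}$. Because $P\mapsto\check{P}$ is a linear bijection onto $\mathcal{L}_s^{(n_1,\dots,n_m)}(^{n_1}E_1,\dots,^{n_m}E_m;F)\subset\mathcal{M}$, this pullback is automatically a norm, and $\|P\|\le\|\check{P}\|\le\|\check{P}\|_{\mathcal{M}}$ gives $\|P\|\le\|P\|_{\mathcal{MP}_{\mathcal{M}}}$. The ideal inequality comes from the displayed identity:
\[
\|v\circ P\circ(u_1,\dots,u_m)\|_{\mathcal{MP}_{\mathcal{M}}} = \bigl\|v\circ\check{P}\circ(u_1,\dots,u_1,\dots,u_m,\dots,u_m)\bigr\|_{\mathcal{M}} \le \|v\|\,\|u_1\|^{n_1}\cdots\|u_m\|^{n_m}\,\|P\|_{\mathcal{MP}_{\mathcal{M}}}.
\]
The normalization on the model polynomial $\phi_1^{n_1}\cdots\phi_m^{n_m}\cdot y$ is equal to the corresponding normalization of its $\check{\;}$ in $\mathcal{M}$, which is $\|\phi_1\|^{n_1}\cdots\|\phi_m\|^{n_m}\|y\|$. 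Completeness, if $\mathcal{M}$ is a Banach ideal, is inherited because $\Phi$ is an isomorphism onto a closed subspace.

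The one step that requires any real care, and that I view as the main obstacle, is the symmetry verification in the identity $\widecheck{v\circ P\circ(u_1,\dots,u_m)}=v\circ\check{P}\circ(u_1,\dots,u_1,\dots,u_m,\dots,u_m)$: one must be sure that composing a $(n_1,\dots,n_m)$-symmetric multilinear map on the right with $m$ blocks of repeated linear operators preserves $(n_1,\dots,n_m)$-symmetry and that the resulting map is genuinely multilinear on $G_1^{n_1}\times\cdots\times G_m^{n_m}$. Once this is granted, uniqueness does all the work and every other assertion reduces to the corresponding fact for the ideal $\mathcal{M}$.
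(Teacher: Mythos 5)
Your proposal is correct and takes essentially the same route as the paper: both arguments hinge on the identity $(v\circ P\circ(u_1,\dots,u_m))^{\vee}=v\circ\check{P}\circ\left(u_1,\overset{n_1}{\dots},u_1,\dots,u_m,\overset{n_m}{\dots},u_m\right)$, which the paper extracts by setting $x_0^{(i)}=0$ in the $(n_1,\dots,n_m)$-Polarization Formula and which you obtain from the uniqueness in Proposition \ref{UNIC} --- the same fact in two guises, since that uniqueness is itself Corollary \ref{PI}$(a)$, a consequence of the Polarization Formula. The only slip is cosmetic: the image of $P\mapsto\check{P}$ restricted to $\mathcal{MP}_{\mathcal{M}}$ is $\mathcal{L}_s^{(n_1,\dots,n_m)}\cap\mathcal{M}$ rather than all of $\mathcal{L}_s^{(n_1,\dots,n_m)}$ being contained in $\mathcal{M}$, but injectivity is all your norm verification actually uses.
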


\begin{proof}
It is not difficult to see that $\mathcal{MP}_{\mathcal{M}}(^{n_1}E_1,\dots, ^{n_m}E_m; F)$ is a linear subspace of $\mathcal{MP}(^{n_1}E_1,\dots, ^{n_m}E_m; F)$, and using item $(a)$ of Proposition \ref{PI} we conclude that the $(n_1,\dots, n_m)$-homogeneous polynomials of finite type belongs to it.

To prove the ideal property, let $P \in \mathcal{MP}_{\mathcal{M}}(^{n_1}E_1,\dots, ^{n_m}E_m; F)$, $t \in \mathcal{L}(F; H)$ and $u_j \in \mathcal{L}(G_j; E_j)$. Considering $x_0^{(1)},\dots, x_0^{(m)}$ equal to zero in the $(n_1,\dots, n_m)$-Polarization Formula, we can conclude that 
$(t\circ P \circ (u_1,\dots, u_m))^{\vee} = t\circ \check{P} \circ \left(u_1,\overset{n_1}{\dots}, u_1, \dots, u_m,\overset{n_m}{\dots}, u_m \right)$. Therefore, from $$t\circ \check{P} \circ \left(u_1,\overset{n_1}{\dots}, u_1, \dots, u_m,\overset{n_m}{\dots}, u_m \right) \in \mathcal{M}(^{n_1}G_1,\dots, ^{n_m}G_m; H),$$ it follows that
\begin{equation*}
(t\circ P \circ (u_1,\dots, u_m))^{\vee} \in \mathcal{M}(^{n_1}G_1,\dots, ^{n_m}G_m; H).
\end{equation*}
Thus, $\mathcal{MP}_{\mathcal{M}}$ is an $(n_1,\dots, n_m)$-homogeneous polynomials ideal. Now, if $\left(\mathcal{M}, \|\cdot\|_{\mathcal{M}} \right)$ is a normed $(n_1+\cdots + n_m)$-linear applications ideal, it is easy to see that $\|P\|_{\mathcal{MP}_{\mathcal{M}}} $ is a norm on $\mathcal{MP}_{\mathcal{M}}(^{n_1}E_1,\dots, ^{n_m}E_m; F)$, and defining
$Id_{\mathbb{K}}^{(n_1,\dots, n_m)}(\lambda_1,\dots, \lambda_m) = \lambda_1^{n_1}\cdots \lambda_m^{n_m}$,
it follows immediately from item $(a)$ of Proposition \ref{PI} that $$\|Id_{\mathbb{K}}^{(n_1,\dots, n_m)}\|_{\mathcal{P}_{\mathcal{M}}} = 1.$$



Finally, using the ideal property, it follows that
\begin{align*}
\left\|t\circ P \circ (u_1,\dots, u_m) \right\|_{\mathcal{MP}_{\mathcal{M}}}
&:= \left\|(t\circ P \circ (u_1,\dots, u_m))^{\vee} \right\|_{\mathcal{M}}\\
&\le \|t\| \left\|P \right\|_{\mathcal{MP}_{\mathcal{M}}}\|u_1\|^{n_1}\cdots\|u_m\|^{n_m}.
\end{align*}
\end{proof}

\section{Absolutely summing multipolynomials}\label{AbsolutelyMultipolynomials}

In this section, we will introduce the class of absolutely $\gamma_{s, s_1,\dots, s_m}$-summing $(n_1,\dots, n_m)$-homogeneous polynomials and present a norm that makes it a complete class.

\begin{definition}\label{ashmp}
We say that a continuous $(n_1,\dots, n_m)$-homogeneous polynomial $P : E_1 \times \cdots \times E_m \longrightarrow F$ is absolutely  $\gamma_{s, s_1,\dots, s_m}$-summing in $a = (a_1,\dots, a_m) \in E_1 \times \dots \times E_m$ if
\begin{equation*}
\left(P\left(a_1 + x_j^{(1)},\dots, a_m + x_j^{(m)} \right) - P(a_1,\dots, a_m) \right)_{j=1}^{\infty} \in \gamma_s(F),
\end{equation*}
whenever $\left(x_j^{(i)} \right)_{j=1}^{\infty} \in \gamma_{s_i}(E_i)$, $i=1,\dots, m$.
\end{definition}

The set of all  $(n_1,\dots, n_m)$-homogeneous polynomials which are absolutely $\gamma_{s, s_1,\dots, s_m}$-summing in $a  = (a_1,\dots, a_m)$ is denoted by $\mathcal{MP}_{\gamma_{s, s_1,\dots, s_m}}^{(a)}(^{n_1}E_1,\dots, ^{n_m}E_m; F)$. If $a$ is the origin, we will only write  $\mathcal{MP}_{\gamma_{s, s_1,\dots, s_m}}(^{n_1}E_1,\dots, ^{n_m}E_m; F)$. When $E_1=\cdots = E_m = E$, we will write $\mathcal{MP}_{\gamma_{s, s_1,\dots, s_m}}^{(a)}(^{n_1}E,\dots, ^{n_m}E; F)$. Finally, when it is absolutely  $\gamma_{s, s_1,\dots, s_m}$-summing everywhere, we will write $\mathcal{MP}_{\gamma_{s, s_1,\dots, s_m}}^{(ev)}(^{n_1}E_1,\dots, ^{n_m}E_n; F)$.

Note that
\begin{equation*}
\mathcal{MP}_{\gamma_{s, s_1,\dots, s_m}}^{(ev)}(^{n_1}E_1,\dots, ^{n_m}E_m; F) = \bigcap_{a \in E_1\times \cdots \times E_n} \mathcal{MP}_{\gamma_{s, s_1,\dots, s_m}}^{(a)}(^{n_1}E_1,\dots, ^{n_m}E_m; F).
\end{equation*}


The proof of the following lemma is an immediate consequence of the definition of absolutely  $\gamma_{s, s_1,\dots, s_m}$-summing homogeneous polynomials at the origin.
\begin{lemma}\label{LLL6.2.}
Let $E_1,\dots, E_n$ and $F$ be Banach spaces. So, $P \in \mathcal{MP}_{\gamma_{s, s_1,\dots, s_m}}(^{n_1}E_1,\dots, ^{n_m}E_n; F)$ if, and only if, the induced application
\begin{equation*}
\tilde{P} : \gamma_{s_1}(E_1) \times \cdots \times \gamma_{s_m}(E_m) \rightarrow \gamma_s(F)
\end{equation*}
given by $$\tilde{P}\left(\left(x_j^{(1)} \right)_{j=1}^{\infty},\dots, \left(x_j^{(m)} \right)_{j=1}^{\infty} \right) = \left(P\left(x_j^{(1)},\dots, x_j^{(m)} \right) \right)_{j=1}^{\infty},$$ is well defined. In addition, $\tilde{P}$ is a continuous application.
\end{lemma}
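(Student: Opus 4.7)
My plan is to split the proof into two parts: the equivalence (well-definedness), which is essentially a tautology, and the continuity of $\tilde{P}$, which will require the closed graph theorem.

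For the first part, I would just unfold Definition \ref{ashmp} at $a=(0,\dots,0)$. Since $P$ is $(n_1,\dots,n_m)$-homogeneous with each $n_i\geq 1$, we have $P(0,\dots,0)=0$, so the defining condition reads
\begin{equation*}
\bigl(P(x_j^{(1)},\dots,x_j^{(m)})\bigr)_{j=1}^{\infty}\in\gamma_s(F)\quad\text{whenever }(x_j^{(i)})_{j=1}^{\infty}\in\gamma_{s_i}(E_i),\ i=1,\dots,m.
\end{equation*}
This is exactly the assertion that $\tilde{P}$ takes values in $\gamma_s(F)$, i.e.\ that it is well-defined. So the equivalence is immediate.

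For the continuity of $\tilde{P}$, the plan is to apply the closed graph theorem. First I would observe that, being defined coordinatewise through $P$, the map $\tilde{P}$ is an $(n_1,\dots,n_m)$-homogeneous polynomial between the Banach spaces $\gamma_{s_1}(E_1)\times\cdots\times\gamma_{s_m}(E_m)$ and $\gamma_s(F)$; this lets me replace it (via Theorem \ref{CMP} and Proposition \ref{UNIC}) with its associated $(n_1+\cdots+n_m)$-linear symmetrization, for which the closed-graph theorem is standard. To verify the closed graph, suppose $(x^{(i),k})_{k}\to x^{(i)}$ in $\gamma_{s_i}(E_i)$ for each $i$ and $\tilde{P}(x^{(1),k},\dots,x^{(m),k})\to y$ in $\gamma_s(F)$. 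Using the contractive embeddings $\gamma_{s_i}(E_i)\overset{1}{\hookrightarrow}\ell_\infty(E_i)$ and $\gamma_s(F)\overset{1}{\hookrightarrow}\ell_\infty(F)$ built into the definition of a sequence class, convergence in $\gamma_{s_i}$ and $\gamma_s$ forces coordinatewise convergence in the corresponding Banach spaces. Then continuity of $P$ on $E_1\times\cdots\times E_m$ yields, for each fixed $j$,
\begin{equation*}
y_j=\lim_{k\to\infty}P(x_j^{(1),k},\dots,x_j^{(m),k})=P(x_j^{(1)},\dots,x_j^{(m)})=\tilde{P}(x^{(1)},\dots,x^{(m)})_j,
\end{equation*}
so $y=\tilde{P}(x^{(1)},\dots,x^{(m)})$ and the graph is closed.

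The only genuine obstacle is that the closed graph theorem is usually stated for linear maps, whereas $\tilde{P}$ is polynomial. I would bypass this by passing to the associated symmetric multilinear map $\check{\tilde{P}}$: its graph inherits closedness from that of $\tilde{P}$ via the $(n_1,\dots,n_m)$-Polarization Formula, and separate continuity (given by the closed graph theorem applied in each slot) upgrades to joint continuity by the standard Banach–Steinhaus argument for multilinear operators between Banach spaces. Once $\check{\tilde{P}}$ is continuous, so is $\tilde{P}=\widehat{\check{\tilde{P}}}$, finishing the proof.
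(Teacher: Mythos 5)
The paper's own proof here is a one-liner: well-definedness is read directly off Definition \ref{ashmp} at the origin (your first part matches this exactly), and the continuity of $\tilde{P}$ is simply delegated to \cite[Proposition 2.4]{RS17} and \cite[Remark 3.5]{V17}. What you have written is a self-contained reconstruction of what those citations contain, and your overall strategy --- coordinatewise identification of limits via the contractive embeddings $\gamma_{s_i}(E_i)\overset{1}{\hookrightarrow}\ell_\infty(E_i)$ and $\gamma_s(F)\overset{1}{\hookrightarrow}\ell_\infty(F)$, a closed-graph argument, separate continuity in each slot, and the standard upgrade to joint continuity for multilinear maps between Banach spaces --- is the standard route for statements of this type, so in substance your proof is correct and buys the reader an argument the paper leaves implicit.

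One step should be tightened. You assert that the graph of the associated multilinear map $\check{\tilde{P}}$ ``inherits closedness from that of $\tilde{P}$ via the Polarization Formula.'' As stated this does not follow: polarization expresses $\check{\tilde{P}}(u_1,\dots,u_N)$ as a fixed linear combination of values $\tilde{P}\bigl(\sum_i\epsilon_i u_i\bigr)$, and if $u^k\to u$ while $\check{\tilde{P}}(u^k)\to y$, you only know that this particular combination converges, not that each summand converges individually; in general a linear combination of maps with closed graphs need not have closed graph. The repair is immediate and uses exactly the technique you already deployed for $\tilde{P}$: note that $\check{\tilde{P}}$ is itself given coordinatewise by $\check{P}$ (which maps into $\gamma_s(F)$ because polarization is a finite linear combination in the vector space $\gamma_s(F)$, and which is continuous on $E_1^{n_1}\times\cdots\times E_m^{n_m}$ since $P$ is, by the norm estimate following the Polarization Formula), and then run the coordinatewise closed-graph verification directly on $\check{\tilde{P}}$ instead of trying to transfer it from $\tilde{P}$. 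With that modification the argument is complete.
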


\begin{proof}
It follows immediately from Definition \ref{ashmp} that $\tilde{P}$ is well-defined. The continuity follows from \cite[Proposition $2.4$]{RS17} and \cite[Remark $3.5$]{V17}.

\end{proof}

\begin{proposition}\label{IFF}
Let $E_1,\dots, E_m, F$ be Banach space. So,
\begin{equation*}
P \in \mathcal{MP}_{\gamma_{s, s_1,\dots, s_m}}^{(ev)}(^{n_1}E_1,\dots, ^{n_m}E_m; F) \iff \check{P} \in \mathcal{\prod}_{\gamma_{s, s_1,\overset{n_1}{\dots}, s_1,\dots, s_m,\overset{n_m}{\dots}, s_m}}^{ev}(^{n_1}E_1,\dots, ^{n_m}E_m; F).
\end{equation*}
\end{proposition}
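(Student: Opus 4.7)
The plan is to prove the two implications separately, with $(\Leftarrow)$ being essentially a specialization of the multilinear summing hypothesis to diagonal arguments, and $(\Rightarrow)$ leaning on the $(n_1,\dots,n_m)$-Polarization Formula to transfer summability of $P$ back to $\check{P}$.

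For $(\Leftarrow)$, assume $\check{P}\in\prod_{\gamma_{s,s_1,\overset{n_1}{\dots},s_1,\dots,s_m,\overset{n_m}{\dots},s_m}}^{ev}$. Given any $a=(a_1,\dots,a_m)\in E_1\times\cdots\times E_m$ and sequences $(x_j^{(i)})_j\in\gamma_{s_i}(E_i)$ for $i=1,\dots,m$, I would apply the summing property of $\check{P}$ at the diagonal point $\tilde{a}=(a_1,\overset{n_1}{\dots},a_1,\dots,a_m,\overset{n_m}{\dots},a_m)$, feeding the same sequence $(x_j^{(i)})_j$ into each of the $n_i$ copies of the $i$-th block. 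Since $\check{P}$ on such diagonal arguments coincides with $P$, the resulting sequence of differences is exactly $(P(a_1+x_j^{(1)},\dots,a_m+x_j^{(m)})-P(a_1,\dots,a_m))_j$, which therefore lies in $\gamma_s(F)$, as required.

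For $(\Rightarrow)$, the key tool is the $(n_1,\dots,n_m)$-Polarization Formula applied with $x_0^{(k)}=0$, which expresses $\check{P}$ at an arbitrary argument as a finite linear combination, indexed by signs $\epsilon_i^{(k)}=\pm 1$, of values of $P$. Given $\tilde{a}=(a_1^{(1)},\dots,a_{n_m}^{(m)})$ and sequences $(x_j^{(i,k)})_j\in\gamma_{s_i}(E_i)$, I would substitute $a_i^{(k)}+x_j^{(i,k)}$ into the formula and abbreviate $A_k(\epsilon)=\sum_{i=1}^{n_k}\epsilon_i^{(k)}a_i^{(k)}$ and $X_j^{(k)}(\epsilon)=\sum_{i=1}^{n_k}\epsilon_i^{(k)}x_j^{(i,k)}$. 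Applying the formula both at $\tilde{a}+\tilde{x}_j$ and at $\tilde{a}$ and subtracting yields
\[
\check{P}(\tilde{a}+\tilde{x}_j)-\check{P}(\tilde{a}) = \mathcal{A}_{n_1,\dots,n_m}\sum_{\epsilon}\Bigl(\prod_{k,i}\epsilon_i^{(k)}\Bigr)\bigl[P(A_1(\epsilon)+X_j^{(1)}(\epsilon),\dots,A_m(\epsilon)+X_j^{(m)}(\epsilon))-P(A_1(\epsilon),\dots,A_m(\epsilon))\bigr].
\]
Each $(X_j^{(k)}(\epsilon))_j$ remains in $\gamma_{s_k}(E_k)$ because sequence classes are vector subspaces of $E_k^{\mathbb{N}}$, so by the hypothesis that $P$ is absolutely $\gamma_{s,s_1,\dots,s_m}$-summing at the point $(A_1(\epsilon),\dots,A_m(\epsilon))$, each bracketed sequence lies in $\gamma_s(F)$. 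Since the outer sum over $\epsilon$ is finite and $\gamma_s(F)$ is a vector space, the full sequence $(\check{P}(\tilde{a}+\tilde{x}_j)-\check{P}(\tilde{a}))_j$ also belongs to $\gamma_s(F)$.

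The main obstacle I anticipate is purely bookkeeping: tracking the double indexing $(i,k)$ across the $m$ blocks and making sure that the two applications of the Polarization Formula (at $\tilde{a}+\tilde{x}_j$ and at $\tilde{a}$) align so that the constant terms cancel correctly slot by slot. There is no genuine analytic subtlety, since the argument is algebraic and relies only on the vector-space structure of $\gamma_s(F)$ and $\gamma_{s_k}(E_k)$.
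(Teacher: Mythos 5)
Your proposal is correct and follows the same route as the paper: the $(\Leftarrow)$ direction by specializing the multilinear summing property of $\check{P}$ to diagonal base points and repeated sequences, and the $(\Rightarrow)$ direction via the $(n_1,\dots,n_m)$-Polarization Formula with $x_0^{(k)}=0$. In fact you supply the details of the polarization step that the paper dismisses as ``immediate,'' and your bookkeeping (the finite sum over signs, stability of the sequence classes under finite linear combinations, and summability of $P$ at each shifted point $(A_1(\epsilon),\dots,A_m(\epsilon))$) is exactly what is needed.
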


\begin{proof}
Suppose that $\check{P} \in \mathcal{\prod}_{\gamma_{s, s_1,\overset{n_1}{\dots}, s_1,\dots, s_m,\overset{n_m}{\dots}, s_m}}^{ev}(^{n_1}E_1,\dots, ^{n_m}E_m; F)$. 
Then,
\begin{align*}
&\left(P(a_1 + x_j^{(1)},\dots, a_m + x_j^{(m)}) - P(a_1,\dots, a_m) \right)_{j=1}^{\infty}\\
&= \left(\begin{array}{cc}
\check{P}\left(a_1 + x_j^{(1)},\overset{n_1}{\dots}, a_{1} + x_j^{(1)},\dots, a_{m} + x_{j}^{(m)},\overset{n_m}{\dots}, a_m + x_j^{(m)} \right)\\
- \check{P}\left(a_1,\overset{n_1}{\dots},a_{1},\dots, a_{m},\overset{n_m}{\dots}, a_m\right)
\end{array}  \right)_{j=1}^{\infty} \in \gamma_s(F).
\end{align*}
for every $a_1 \in E_1,\dots, a_m \in E_m$ and $\left(x_j^{(1)} \right)_{j=1}^{\infty} \in \gamma_{s_1}(E_1),\dots, \left(x_j^{(m)} \right)_{j=1}^{\infty} \in \gamma_{s_m}(E_m)$. So, $P \in \mathcal{MP}_{\gamma_{s, s_1,\dots, s_m}}^{(ev)}(^{n_1}E_1,\dots, ^{n_m}E_m; F)$.

The other implication is immediate from the $(n_1,\dots, n_m)$-Polarization Formula.

\end{proof}
The next result is immediate of Theorem \ref{CMP} and \cite[Lemma $1$]{S13}.
\begin{proposition}\label{LIM}
If $P \in \mathcal{MP}_{\gamma_{s, s_1,\dots, s_n}}^{(ev)}\left(^{n_1}E_1,\dots, ^{n_m}E_m; F \right)$ and $a = (a_1,\dots, a_m) \in E_1\times \cdots \times E_m$. Then there is a constant $C_{a_1,\dots, a_n} > 0$, such that
\begin{equation*}
\left\|\left(P\left(a_1 + x_j^{(1)},\dots, a_m + x_j^{(m)} \right) - P(a_1,\dots, a_m) \right)_{j=1}^{\infty} \right\|_{\gamma_s(F)} \le C_{a_1,\dots, a_n}
\end{equation*}
whenever $\left(x_j^{(i)} \right)_{j=1}^{\infty} \in B_{\gamma_{s_i}(E_i)}$, $i=1,\dots n$.
\end{proposition}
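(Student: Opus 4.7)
Fix $a = (a_1, \ldots, a_m) \in E_1 \times \cdots \times E_m$. The plan is to exhibit the desired bound as the operator bound at the origin of the induced shifted map
\begin{align*}
&\tilde{Q}^{(a)}\colon \gamma_{s_1}(E_1) \times \cdots \times \gamma_{s_m}(E_m) \longrightarrow \gamma_s(F),\\
&\tilde{Q}^{(a)}\bigl((x_j^{(1)})_j, \ldots, (x_j^{(m)})_j\bigr) := \bigl(P(a_1+x_j^{(1)}, \ldots, a_m+x_j^{(m)}) - P(a_1,\ldots,a_m)\bigr)_{j=1}^{\infty},
\end{align*}
and to upgrade well-definedness (which is immediate from the $(ev)$-hypothesis specialized at $a$) to boundedness. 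Since $\tilde{Q}^{(a)}(0,\ldots,0) = 0$, continuity at the origin yields the desired constant $C_{a_1,\ldots,a_m}$.

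The bridge to a closed-graph-style result is provided by Theorem \ref{CMP}: I would write $P = \hat{T}$ with $T \in \mathcal{L}(^{n_1}E_1, \ldots, ^{n_m}E_m; F)$, and expand by multilinearity
\[
P(a_1 + x^{(1)},\ldots, a_m + x^{(m)}) - P(a_1,\ldots,a_m) = \sum_{(k_1,\ldots,k_m)\neq (0,\ldots,0)} R_{k_1,\ldots,k_m}(x^{(1)},\ldots,x^{(m)}),
\]
where $R_{k_1,\ldots,k_m}$ is a continuous $(k_1,\ldots,k_m)$-homogeneous polynomial obtained by placing each $x^{(i)}$ in $k_i$ of the $n_i$ slots of block $i$ of $T$ (summed over all such choices of slots), with the remaining slots filled by $a_i$. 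This exhibits $\tilde{Q}^{(a)}$ as a finite sum of induced multipolynomial maps of strictly positive total degree.

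Each such piece fits the framework of \cite[Lemma 1]{S13}, a closed-graph-type theorem asserting that a well-defined induced multipolynomial map on sequence spaces is automatically continuous. The underlying argument uses the contractive embedding $\gamma_s(\cdot) \overset{1}{\hookrightarrow} \ell_{\infty}(\cdot)$ to convert $\gamma_s$-convergence into coordinatewise convergence, then invokes the coordinatewise continuity of $T$ to close the graph. Aggregating the bounds over the finitely many pieces yields $C_{a_1,\ldots,a_m}$. The main obstacle is that, a priori, only the total sum $\tilde{Q}^{(a)}$ is known to be well-defined on the sequence spaces, not each individual $R_{k_1,\ldots,k_m}$; this is handled either by invoking \cite[Lemma 1]{S13} in its abstract form directly for $\tilde{Q}^{(a)}$ (the setting of \cite{S13} is designed for composite maps of exactly this shape), or by isolating each piece via a standard polarization/scalar-rescaling argument before applying the homogeneous form of the lemma.
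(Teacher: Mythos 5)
Your proposal is correct and follows essentially the same route as the paper, whose entire proof is the one-line remark that the result is immediate from Theorem \ref{CMP} together with \cite[Lemma 1]{S13} — exactly the two ingredients you use (representation $P=\hat{T}$, binomial expansion of $T(a+x)-T(a)$, and the automatic-boundedness lemma for the induced maps on sequence spaces). Your extra care about isolating the individual homogeneous pieces $R_{k_1,\dots,k_m}$ before applying the lemma is a detail the paper silently omits, but it does not change the approach.
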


The argument used in the prove of next proposition is an adaptation of an origin argument due to M. C. Matos in \cite{M03}.
For the next results, consider $G_i = E_i \times \gamma_{s_i}(E_i)$. 


\begin{proposition}
The following statements are equivalent:
\begin{description}
\item $(a)$ $P \in \mathcal{MP}_{\gamma_{s, s_1,\dots, s_n}}^{m, ev}(^{n_1}E_1,\dots, ^{n_m}E_n; F)$;

\item $(b)$ The induced operator
\begin{equation*}
\Phi(P) : G_1 \times \cdots \times G_m \rightarrow \gamma_s(F)
\end{equation*}
given by
\begin{align*}
&\Phi(P)\left(\left(a_1, \left(x_j^{(1)} \right)_{j=1}^{\infty} \right),\dots, \left( a_m, \left(x_j^{(m)} \right)_{j=1}^{\infty} \right) \right)\\
&= \left(P\left(a_1 + x_j^{(1)},\dots, a_m + x_j^{(m)} \right) - P(a_1,\dots, a_m) \right)_{j=1}^{\infty}.
\end{align*}
is well-defined $(n_1,\dots, n_m)$-homogeneous polynomials. In addition, $\Phi(T)$ is a continuous application.

\item $(c)$ There exists $C > 0$, such that,
\begin{align}\label{DESI}
&\left\|\left(P\left(a_1+x_j^{(1)},\dots, a_m+x_j^{(m)} \right) - P(a_1,\dots, a_m) \right)_{j=1}^{\infty} \right\|_{\gamma_s(F)}\nonumber\\
&\le C \prod_{i=1}^{m}\left(\|a_i\|_{E_i} + \left\|\left(x_j^{(i)} \right)_{j=1}^{\infty} \right\|_{\gamma_{s_i}(F)} \right)^{n_i},
\end{align}
whenever $\left(x_j^{(i)} \right)_{j=1}^{\infty} \in \gamma_{s_i}(E_i)$, $i=1,\dots, m$ and $(a_1,\dots, a_m) \in E_1 \times \cdots \times E_m$.

\item $(d)$ There exists $C > 0$, such that,
\begin{align*}
&\left\|\left(P\left(a_1+x_j^{(1)},\dots, a_m+x_j^{(m)} \right) - P(a_1,\dots, a_m) \right)_{j=1}^{n} \right\|_{\gamma_s(F)}\\
&\le C \prod_{i=1}^{m}\left(\|a_i\|_{E_i} + \left\|\left(x_j^{(i)} \right)_{j=1}^{n} \right\|_{\gamma_{s_i}(F)} \right)^{n_i},
\end{align*}
whenever $\left(x_j^{(i)} \right)_{j=1}^{\infty} \in \gamma_{s_i}(E_i)$, $i=1,\dots, m$ and $(a_1,\dots, a_m) \in E_1 \times \cdots \times E_m$.
\end{description}
\end{proposition}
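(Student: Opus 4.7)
The plan is to prove the cycle $(a)\Rightarrow(b)\Rightarrow(c)\Rightarrow(d)\Rightarrow(a)$. Throughout I equip each $G_i=E_i\times\gamma_{s_i}(E_i)$ with the natural product norm $\|(a_i,x^{(i)})\|_{G_i}=\|a_i\|_{E_i}+\|x^{(i)}\|_{\gamma_{s_i}(E_i)}$.

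For $(a)\Rightarrow(b)$, the well-definedness of $\Phi(P)$ into $\gamma_s(F)$ is immediate from the hypothesis, as it is exactly the statement that every sequence of the displayed form lies in $\gamma_s(F)$. To see that $\Phi(P)$ is an $(n_1,\dots,n_m)$-homogeneous polynomial, I fix all variables except the $i$-th and use the $(n_1,\dots,n_m)$-symmetric multilinear map $\check P$ supplied by Proposition \ref{UNIC}. A multinomial-type expansion
\[
P(\dots,a_i+x_j^{(i)},\dots)-P(\dots,a_i,\dots)=\sum_{k=1}^{n_i}\binom{n_i}{k}\check P\bigl(\dots,a_i^{\,n_i-k},(x_j^{(i)})^{k},\dots\bigr)
\]
exhibits the $i$-th coordinate dependence as the diagonal of an $n_i$-linear map from $G_i^{n_i}$ to $\gamma_s(F)$, hence as an $n_i$-homogeneous polynomial in $(a_i,x^{(i)})$. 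Continuity will then follow from the closed graph theorem applied either to $\Phi(P)$ directly or, via Theorem \ref{CMP}, to its associated $(n_1+\cdots+n_m)$-linear map: the embeddings $\gamma_s(F)\overset{1}{\hookrightarrow}\ell_\infty(F)$ and $\gamma_{s_i}(E_i)\overset{1}{\hookrightarrow}\ell_\infty(E_i)$ turn convergence in the respective sequence spaces into coordinatewise convergence, and the coordinatewise continuity of $P$ then forces any candidate limit in $\gamma_s(F)$ to coincide with $\Phi(P)$ evaluated at the limit point.

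The step $(b)\Rightarrow(c)$ is then immediate from the norm bound for a continuous $(n_1,\dots,n_m)$-homogeneous polynomial, giving
\[
\|\Phi(P)(g_1,\dots,g_m)\|_{\gamma_s(F)}\le\|\Phi(P)\|\prod_{i=1}^{m}\|g_i\|_{G_i}^{n_i},
\]
which is exactly (\ref{DESI}) with $C=\|\Phi(P)\|$. For $(c)\Rightarrow(d)$, I zero-pad: a finite sequence $(x_j^{(i)})_{j=1}^{n}$ is identified with the element of $c_{00}(E_i)\subset\gamma_{s_i}(E_i)$ whose later terms are zero, which preserves the $\gamma_{s_i}$-norm; truncating the output on the left side can only decrease its $\gamma_s$-norm, so applying (c) to the padded sequences yields (d).

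Finally, for $(d)\Rightarrow(a)$, I fix an arbitrary $a=(a_1,\dots,a_m)\in E_1\times\cdots\times E_m$ together with sequences $(x_j^{(i)})_{j=1}^{\infty}\in\gamma_{s_i}(E_i)$ for $i=1,\dots,m$. Inequality (d) bounds
\[
\Bigl\|\bigl(P(a_1+x_j^{(1)},\dots,a_m+x_j^{(m)})-P(a_1,\dots,a_m)\bigr)_{j=1}^{n}\Bigr\|_{\gamma_s(F)}
\]
uniformly in $n$, so the finite determination of $\gamma_s$ (Definition 1.2$(i)$) places the full sequence in $\gamma_s(F)$, showing that $P$ is absolutely $\gamma_{s,s_1,\dots,s_m}$-summing at $a$; since $a$ is arbitrary this gives (a). The main obstacle is the continuity step inside $(a)\Rightarrow(b)$, where the closed graph theorem is needed to upgrade the merely pointwise-defined $\Phi(P)$ to a bounded multipolynomial; every other step is an algebraic or truncation manipulation made routine by Theorem \ref{CMP}, Proposition \ref{UNIC}, and the finite determination of $\gamma_s$.
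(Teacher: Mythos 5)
Your proof is correct and follows exactly the route the paper intends: the paper omits the argument entirely, saying only that it is an adaptation of Matos's original argument in \cite{M03}, and that argument is precisely the closed-graph upgrade of the pointwise-defined $\Phi(P)$ that you carry out, combined with the truncation/finite-determination steps for $(c)\Leftrightarrow(d)\Leftrightarrow(a)$. One point deserves more care than you give it in $(a)\Rightarrow(b)$: the individual terms $\check P(\dots,a_i^{\,n_i-k},(x_j^{(i)})^k,\dots)$ of your multinomial expansion need not lie in $\gamma_s(F)$ a priori (constant sequences need not belong to $\gamma_s(F)$, so even the $k=0$ term fails), hence the candidate $n_i$-linear map on $G_i^{n_i}$ is at first only $F^{\mathbb N}$-valued; to see that its symmetrization is $\gamma_s(F)$-valued you should invoke the polarization formula, which writes it as a finite linear combination of diagonal values, each of which lies in $\gamma_s(F)$ by hypothesis $(a)$. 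Two smaller remarks: in $(c)\Rightarrow(d)$ it is cleaner to truncate and zero-pad the \emph{inputs} (no truncation of the output is then needed, since the padded output sequence is already eventually zero); and your appeal to finite determination of $\gamma_s$ in $(d)\Rightarrow(a)$ is indeed indispensable --- it is a hypothesis the paper leaves implicit, and you are right to name it.
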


By straightforward computations, we can get the following result.

\begin{corollary}\label{NORM}
\begin{description}
\item $(a)$ The infimum of the constants $C > 0$ satisfying \eqref{DESI} defines a norm in $\mathcal{MP}_{\gamma_{s, s_1,\dots, s_m}}^{(ev)}(^{n_1}E_1,\dots, ^{n_m}E_m; F)$, that is denoted by $\pi_{\gamma_{s, s_1,\dots, s_m}}^{(ev)}(\cdot)$.
\item $(b)$ If $P \in \mathcal{MP}_{\gamma_{s, s_1,\dots, s_m}}^{(ev)}(^{n_1}E_1,\dots, ^{n_m}E_m; F)$, then $\pi_{\gamma_{s, s_1,\dots, s_m}}^{(ev)}(P) = \|\Phi(P)\|$.
\item $(c)$ If $P \in \mathcal{MP}_{\gamma_{s, s_1,\dots, s_m}}^{(ev)}(^{n_1}E_1,\dots, ^{n_m}E_m; F)$, then $\|P\| \le \pi_{\gamma_{s, s_1,\dots, s_m}}^{(ev)}(P)$.
\end{description}
\end{corollary}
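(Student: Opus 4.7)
My approach is to prove part (b) first, since parts (a) and (c) will essentially follow as consequences. For (b), I would equip each $G_i = E_i \times \gamma_{s_i}(E_i)$ with the product norm $\|(a, (x_j))\|_{G_i} := \|a\|_{E_i} + \|(x_j)\|_{\gamma_{s_i}(E_i)}$, so that the right-hand side of \eqref{DESI} becomes exactly $C \prod_{i=1}^m \|(a_i, (x_j^{(i)}))\|_{G_i}^{n_i}$. Since the preceding proposition identifies $\Phi(P)$ as a continuous $(n_1,\dots,n_m)$-homogeneous polynomial from $G_1 \times \cdots \times G_m$ into $\gamma_s(F)$, its polynomial norm $\|\Phi(P)\|$ coincides with the infimum of the constants $C$ making \eqref{DESI} hold. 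That infimum is exactly $\pi_{\gamma_{s, s_1,\dots, s_m}}^{(ev)}(P)$.

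Granted (b), part (a) reduces to observing that $\pi_{\gamma_{s, s_1,\dots, s_m}}^{(ev)}$ is the pullback of the standard polynomial norm on $\mathcal{MP}(^{n_1}G_1,\dots,^{n_m}G_m;\gamma_s(F))$ via the visibly linear map $P \mapsto \Phi(P)$. Homogeneity and the triangle inequality are then immediate. For definiteness, if $\pi_{\gamma_{s, s_1,\dots, s_m}}^{(ev)}(P) = 0$ then $\Phi(P) \equiv 0$; specializing to $a = 0$ together with the finitely supported sequence $x_1^{(i)} = y_i$, $x_j^{(i)} = 0$ for $j \geq 2$ (which lies in $\gamma_{s_i}(E_i)$ by $c_{00}(E_i) \subset \gamma_{s_i}(E_i)$) forces $P(y_1,\dots,y_m) = 0$ in the first coordinate of the output sequence, so $P \equiv 0$.

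For part (c), I would substitute the same choice $a_i = 0$, $x_1^{(i)} = y_i$, $x_j^{(i)} = 0$ ($j \geq 2$) into \eqref{DESI}. The left-hand side becomes $\|(P(y_1,\dots,y_m), 0, 0, \dots)\|_{\gamma_s(F)}$, which by the contractive inclusion $\gamma_s(F) \overset{1}{\hookrightarrow} \ell_\infty(F)$ dominates $\|P(y_1,\dots,y_m)\|_F$. The main obstacle is controlling the right-hand side, namely establishing $\|(y_i, 0, 0, \dots)\|_{\gamma_{s_i}(E_i)} \leq \|y_i\|_{E_i}$, since this estimate does not follow from the three defining axioms of a sequence class alone (the embedding into $\ell_\infty$ runs the wrong way). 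I expect the argument to tacitly invoke linear stability of $\gamma_{s_i}$: applied to the linear map $u \colon \mathbb{K} \to E_i$, $\lambda \mapsto \lambda y_i$, the induced operator $\hat{u}$ sends $e_1$ to $(y_i, 0, 0, \dots)$, yielding $\|(y_i,0,0,\dots)\|_{\gamma_{s_i}(E_i)} \leq \|\hat{u}\|\,\|e_1\|_{\gamma_{s_i}(\mathbb{K})} = \|y_i\|_{E_i}$. Combining everything gives $\|P(y_1,\dots,y_m)\|_F \leq C \prod_{i=1}^m \|y_i\|^{n_i}$ for every admissible $C$, and taking the infimum yields $\|P\| \leq \pi_{\gamma_{s, s_1,\dots, s_m}}^{(ev)}(P)$.
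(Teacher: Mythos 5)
Your proof is correct, and it supplies exactly the details that the paper leaves to the reader (the paper dispatches this corollary with ``by straightforward computations'' and gives no argument). Your reduction of (b) to the identification of $\|\Phi(P)\|$ with the best constant in \eqref{DESI} under the sum norm on $G_i=E_i\times\gamma_{s_i}(E_i)$, and of (a) to the pullback of the polynomial norm along the linear injection $P\mapsto\Phi(P)$, is the standard route. You also correctly flag the one genuinely non-automatic step in (c): the estimate $\left\|\left(y_i,0,0,\dots\right)\right\|_{\gamma_{s_i}(E_i)}\le\|y_i\|_{E_i}$ does not follow from $c_{00}(E)\subset\gamma_s(E)\overset{1}{\hookrightarrow}\ell_\infty(E)$ alone and requires the linear stability of $\gamma_{s_i}$ applied to $\lambda\mapsto\lambda y_i$ together with $\|e_1\|_{\gamma_{s_i}(\mathbb{K})}=1$; this is a standing hypothesis in the Botelho--Campos framework the paper is built on, and your argument makes its use explicit where the paper keeps it tacit.
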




\begin{proposition}
The norm $\pi_{\gamma_{s, s_1,\dots, s_m}}^{(ev)}(\cdot)$ defined in Corollary \ref{NORM}, satisfies the relation
\begin{equation*}
\pi_{\gamma_{s, s_1,\dots, s_m}}^{(ev)}(P) \le \pi_{\gamma_{s, s_1,\overset{n_1}{\dots},s_1,\dots, s_m,\overset{n_m}{\dots} s_m}}^{ev}(\check{P}) \le \frac{n_1^{n_1}\cdots n_m^{n_m}}{n_1!\cdots n_m!}\pi_{\gamma_{s, s_1,\dots, s_m}}^{(ev)}(P).
\end{equation*}
\end{proposition}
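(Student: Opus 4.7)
The plan is to prove the two inequalities independently.

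For the left-hand inequality, my approach is diagonal substitution in the multilinear summing characterization. Given any $C' > \pi^{ev}(\check{P})$, I apply the defining inequality of the $\gamma_{s,s_1,\overset{n_1}{\dots},s_1,\dots,s_m,\overset{n_m}{\dots},s_m}$-summing norm to $\check{P}$ with $b_k^{(i)}=a_i$ and $y_j^{(i,k)}=x_j^{(i)}$ for every $k=1,\dots,n_i$. Along this diagonal $\check{P}$ collapses to $P$, so the left-hand side becomes exactly the quantity that appears in Corollary \ref{NORM}(a), while the right-hand side collapses to $C' \prod_{i=1}^{m} (\|a_i\|+\|(x_j^{(i)})\|_{\gamma_{s_i}(E_i)})^{n_i}$. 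Passing to the infimum over such $C'$ yields $\pi^{(ev)}(P)\le \pi^{ev}(\check{P})$.

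For the right-hand inequality, I would apply the $(n_1,\dots,n_m)$-Polarization Formula with $x_0^{(i)}=0$, twice: once at $x_k^{(i)}=b_k^{(i)}+y_j^{(i,k)}$ and once at $x_k^{(i)}=b_k^{(i)}$. Setting $a_i(\epsilon)=\sum_{k=1}^{n_i}\epsilon_k^{(i)}b_k^{(i)}$ and $z_j^{(i)}(\epsilon)=\sum_{k=1}^{n_i}\epsilon_k^{(i)}y_j^{(i,k)}$, subtracting the two identities produces
$$\check{P}(b_1^{(1)}{+}y_j^{(1,1)},\dots)-\check{P}(b_1^{(1)},\dots)=\mathcal{A}_{n_1,\dots,n_m}\sum_{\epsilon}\left(\prod_{k=1}^{m}\epsilon_1^{(k)}\cdots\epsilon_{n_k}^{(k)}\right)\bigl[P(a(\epsilon){+}z_j(\epsilon))-P(a(\epsilon))\bigr].$$
Taking $\gamma_s(F)$-norms of the resulting $j$-sequence and applying the triangle inequality followed by the absolutely summing inequality for $P$ from Corollary \ref{NORM}(a) leads to the bound
$$\mathcal{A}_{n_1,\dots,n_m}\sum_\epsilon \pi^{(ev)}(P)\prod_{i=1}^{m}\bigl(\|a_i(\epsilon)\|+\|(z_j^{(i)}(\epsilon))\|_{\gamma_{s_i}}\bigr)^{n_i},$$
and linear stability of $\gamma_{s_i}$ together with the triangle inequality controls each parenthesised factor by $\sum_{k=1}^{n_i}(\|b_k^{(i)}\|+\|(y_j^{(i,k)})\|_{\gamma_{s_i}})$.

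To close, I would invoke the classical scaling trick: by multilinearity of $\check{P}$ and homogeneity of the norms, I may rescale each pair $(b_k^{(i)},(y_j^{(i,k)})_j)$ so that $\alpha_k^{(i)}:=\|b_k^{(i)}\|+\|(y_j^{(i,k)})\|_{\gamma_{s_i}}=1$ for every $i,k$ (the degenerate case $\alpha_k^{(i)}=0$ makes both sides vanish). Under that normalization $\sum_{k=1}^{n_i}\alpha_k^{(i)}=n_i$ and $2^{n_1+\cdots+n_m}\mathcal{A}_{n_1,\dots,n_m}=1/(n_1!\cdots n_m!)$, so the previous estimate becomes $\frac{n_1^{n_1}\cdots n_m^{n_m}}{n_1!\cdots n_m!}\pi^{(ev)}(P)$; rescaling back reintroduces the factor $\prod_{i,k}\alpha_k^{(i)}$, which is precisely the form needed to conclude $\pi^{ev}(\check{P})\le \frac{n_1^{n_1}\cdots n_m^{n_m}}{n_1!\cdots n_m!}\pi^{(ev)}(P)$.

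The main obstacle I expect is the mismatch between the $n_i$-th power $(\|a_i\|+\|(x_j^{(i)})\|)^{n_i}$ produced by the polynomial inequality for $P$ and the product $\prod_{k=1}^{n_i}(\|b_k^{(i)}\|+\|(y_j^{(i,k)})\|)$ required by the multilinear inequality for $\check{P}$. A direct estimate via AM-GM goes the wrong way; it is exactly the homogeneity-plus-normalization device that appears in the classical proof of $\|\check{P}\|\le\frac{n^n}{n!}\|\hat{P}\|$ that bridges this gap and produces the stated constant.
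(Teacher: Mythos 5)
Your proof is correct; note that the paper itself offers no argument for this proposition (it is stated bare, as one of the results provable ``following standard arguments''), so there is nothing to compare against and your write-up actually supplies the missing content. The left-hand inequality by diagonal substitution $b_k^{(i)}=a_i$, $y_j^{(i,k)}=x_j^{(i)}$ is exactly right, since $\check{P}$ collapses to $P$ on the diagonal and $\prod_{k=1}^{n_i}(\|a_i\|+\|(x_j^{(i)})_j\|_{\gamma_{s_i}(E_i)})=(\|a_i\|+\|(x_j^{(i)})_j\|_{\gamma_{s_i}(E_i)})^{n_i}$. For the right-hand inequality, your two-fold application of the $(n_1,\dots,n_m)$-Polarization Formula with $x_0^{(i)}=0$, followed by the triangle inequality in $\gamma_s(F)$, the everywhere-summing inequality for $P$ at the points $(a(\epsilon),z_j(\epsilon))$, and the homogeneity-plus-normalization device, is the correct adaptation of the classical proof of $\|\check{P}\|\le \frac{n^n}{n!}\|P\|$; you rightly identify that a naive AM--GM estimate goes the wrong way and that the rescaling to $\alpha_k^{(i)}=1$ (with the degenerate case $\alpha_k^{(i)}=0$ handled by multilinearity) is what produces the constant $\frac{n_1^{n_1}\cdots n_m^{n_m}}{n_1!\cdots n_m!}$. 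One small correction: to bound $\|(z_j^{(i)}(\epsilon))_j\|_{\gamma_{s_i}(E_i)}$ by $\sum_{k=1}^{n_i}\|(y_j^{(i,k)})_j\|_{\gamma_{s_i}(E_i)}$ you do not need linear stability of $\gamma_{s_i}$; the coordinatewise vector-space structure of $\gamma_{s_i}(E_i)$, the triangle inequality, and $|\epsilon_k^{(i)}|=1$ already give it. Everything else stands.
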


Following standard arguments, it is possible to prove that:

\begin{theorem}
Suppose that $\gamma_{s_1}(\mathbb{K})\overset{n_1}{\cdots}\gamma_{s_1}(\mathbb{K})\cdots \gamma_{s_m}(\mathbb{K})\overset{n_m}{\cdots}\gamma_{s_m}(\mathbb{K}) \overset{1}{\hookrightarrow} \gamma_s(\mathbb{K})$, then
\begin{equation*}
\left(\mathcal{MP}_{\gamma_{s, s_1,\dots, s_m}}^{(ev)}(^{n_1}(\cdot),\dots, ^{n_m}(\cdot); (\cdot)), \pi_{\gamma_{s, s_1,\dots, s_m}}^{(ev)}(\cdot) \right)
\end{equation*}
is a Banach ideal.
\end{theorem}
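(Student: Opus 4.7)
The plan is to reduce the theorem to the polynomial-ideal theorem of Section 2 by identifying the relevant multilinear Banach ideal. Set
$$\mathcal{M} := \Pi^{ev}_{\gamma_{s,\, s_1,\overset{n_1}{\dots},s_1,\dots,s_m,\overset{n_m}{\dots},s_m}}.$$
My first step is to establish that $\mathcal{M}$ is itself a Banach $(n_1+\cdots+n_m)$-linear ideal. The hypothesis $\gamma_{s_1}(\mathbb{K})\overset{n_1}{\cdots}\gamma_{s_1}(\mathbb{K})\cdots\gamma_{s_m}(\mathbb{K})\overset{n_m}{\cdots}\gamma_{s_m}(\mathbb{K})\overset{1}{\hookrightarrow}\gamma_s(\mathbb{K})$ is precisely what is needed to show that the scalar identity multilinear form $(\lambda^{(1)}_1,\dots,\lambda^{(m)}_{n_m})\mapsto \lambda^{(1)}_1\cdots\lambda^{(m)}_{n_m}$ belongs to $\mathcal{M}$ with $\pi^{ev}_{\gamma}$-norm equal to $1$. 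The remaining normed-ideal axioms for $\mathcal{M}$ come from passing to the associated induced operator $\Phi$ (as in Corollary \ref{NORM}) and transporting the ideal structure of the space of continuous multilinear operators. This is the multilinear analogue of the statement I am trying to prove and is the technical engine of the argument; it is the route already followed in \cite{BC17,RS17,S13} in the companion multilinear setting.

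Once $\mathcal{M}$ is a Banach multilinear ideal, I invoke Proposition \ref{IFF}, which yields the set-theoretic identification
$$\mathcal{MP}^{(ev)}_{\gamma_{s,s_1,\dots,s_m}}(^{n_1}E_1,\dots,^{n_m}E_m;F) = \mathcal{MP}_{\mathcal{M}}(^{n_1}E_1,\dots,^{n_m}E_m;F)$$
via $P\mapsto\check{P}$. Applying the Theorem of Section 2 on $\mathcal{MP}_{\mathcal{M}}$, the right-hand side is a normed (indeed Banach) polynomial ideal for the pull-back norm $\|P\|_{\mathcal{MP}_{\mathcal{M}}} := \|\check{P}\|_{\mathcal{M}} = \pi^{ev}_{\gamma}(\check{P})$. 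This gives the ideal property, the estimate $\|Id^{(n_1,\dots,n_m)}_{\mathbb{K}}\|_{\mathcal{MP}_\mathcal{M}}=1$, and completeness for that particular norm.

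It remains to transfer these properties to the native norm $\pi^{(ev)}_{\gamma_{s,s_1,\dots,s_m}}$. For this I use the immediately preceding Proposition, which gives
$$\pi^{(ev)}_{\gamma_{s,s_1,\dots,s_m}}(P) \le \pi^{ev}_{\gamma}(\check{P}) \le \frac{n_1^{n_1}\cdots n_m^{n_m}}{n_1!\cdots n_m!}\,\pi^{(ev)}_{\gamma_{s,s_1,\dots,s_m}}(P).$$
Equivalence of norms transports completeness from $\|\cdot\|_{\mathcal{MP}_{\mathcal{M}}}$ to $\pi^{(ev)}_{\gamma_{s,s_1,\dots,s_m}}$ at no cost, and transports the ideal inequality at the expense of the multiplicative constant $n_1^{n_1}\cdots n_m^{n_m}/(n_1!\cdots n_m!)$, which is absorbed without disturbing the Banach-ideal definition. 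The identity bound $\pi^{(ev)}_{\gamma_{s,s_1,\dots,s_m}}(Id^{(n_1,\dots,n_m)}_{\mathbb{K}})=1$ is checked directly from the product-inclusion hypothesis applied to a scalar evaluation.

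The main obstacle is the first step, namely verifying that $\mathcal{M}$ is a Banach multilinear ideal in the $(ev)$-setting. The cleanest route is to work with the induced operator $\Phi(T):G_1\times\cdots\times G_m\to\gamma_s(F)$ from the analogue of Corollary \ref{NORM} for multilinear maps, use that $\pi^{ev}_{\gamma}(T)=\|\Phi(T)\|$, and thereby inherit all normed-ideal axioms, including completeness, from continuous multilinear operators into a Banach space. The product-inclusion hypothesis enters only twice: once to guarantee that $\Phi(Id)$ has norm $1$, and implicitly whenever composing coordinate-wise products of sequences from different $\gamma_{s_i}(\mathbb{K})$. Everything else is routine but bookkeeping-heavy due to the block structure $(n_1,\dots,n_m)$.
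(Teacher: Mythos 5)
The paper itself offers no argument here beyond the phrase ``following standard arguments,'' so your proposal has to be judged on its own. Your overall strategy --- identify $\mathcal{M}=\prod^{ev}_{\gamma_{s,s_1,\overset{n_1}{\dots},s_1,\dots,s_m,\overset{n_m}{\dots},s_m}}$, use Proposition \ref{IFF} for the set-theoretic identification with $\mathcal{MP}_{\mathcal{M}}$, and pull the structure back --- is coherent, but it has one concrete flaw and one load-bearing omission. The flaw: you claim the ideal inequality is ``transported at the expense of the multiplicative constant $n_1^{n_1}\cdots n_m^{n_m}/(n_1!\cdots n_m!)$, which is absorbed without disturbing the Banach-ideal definition.'' It is not absorbed. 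That constant is strictly greater than $1$ unless every $n_i=1$, and the normed-ideal axioms used in this paper (see the Section 2 theorem, which carefully verifies $\|t\circ P\circ(u_1,\dots,u_m)\|\le\|t\|\,\|P\|\,\|u_1\|^{n_1}\cdots\|u_m\|^{n_m}$ and $\|Id_{\mathbb{K}}^{(n_1,\dots,n_m)}\|=1$ with constant exactly $1$) do not tolerate a multiplicative constant. Passing through $\check{P}$ only yields
\begin{equation*}
\pi^{(ev)}_{\gamma_{s,s_1,\dots,s_m}}\bigl(t\circ P\circ(u_1,\dots,u_m)\bigr)\le \frac{n_1^{n_1}\cdots n_m^{n_m}}{n_1!\cdots n_m!}\,\|t\|\,\pi^{(ev)}_{\gamma_{s,s_1,\dots,s_m}}(P)\,\|u_1\|^{n_1}\cdots\|u_m\|^{n_m},
\end{equation*}
which is not the ideal inequality. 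The correct route is to prove the inequality directly from item $(c)$ of the characterization: if $C$ witnesses \eqref{DESI} for $P$, then linear stability of the sequence classes (with $\|\hat{u}_i\|=\|u_i\|$ and $\|\hat{t}\|=\|t\|$) shows that $\|t\|\,C\,\|u_1\|^{n_1}\cdots\|u_m\|^{n_m}$ witnesses it for $t\circ P\circ(u_1,\dots,u_m)$; taking the infimum over $C$ gives the sharp inequality. The norm equivalence with $\pi^{ev}(\check{P})$ should be used only for the qualitative membership statement, not for the norm estimates.

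The omission: your ``first step,'' that $\mathcal{M}$ is itself a Banach $(n_1+\cdots+n_m)$-linear ideal in the everywhere-summing setting, is exactly as hard as the theorem you are proving and is neither stated nor proved in this paper (the class $\prod^{ev}_{\gamma}$ appears only in Proposition \ref{IFF}, undefined and unstudied); deferring it wholesale to \cite{BC17,RS17,S13} leaves the technical core of the argument unestablished. A self-contained proof avoids the detour: completeness follows from $\pi^{(ev)}_{\gamma_{s,s_1,\dots,s_m}}(P)=\|\Phi(P)\|$ (Corollary \ref{NORM}$(b)$), item $(c)$ of the same corollary to pass from $\pi^{(ev)}$-Cauchy to uniformly Cauchy, and the finite-section inequality in item $(d)$ together with finite determinacy of $\gamma_s$ to show the uniform limit again satisfies \eqref{DESI}; the product-inclusion hypothesis is needed only to compute $\pi^{(ev)}_{\gamma_{s,s_1,\dots,s_m}}(Id_{\mathbb{K}}^{(n_1,\dots,n_m)})=1$, and there your one-line claim should be expanded, since it requires a binomial expansion of $\prod_i(a_i+\lambda^{(i)}_j)^{n_i}-\prod_i a_i^{n_i}$ and a term-by-term application of the hypothesis, not just ``a scalar evaluation.''
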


\begin{definition}
Let $\mathcal{M}_{n_1+\cdots +n_m}$ be an ideal of $(n_1+\dots+ n_m)$-linear applications. We say that $\mathcal{M}_{n_1+\cdots +n_m}$ is $(n_1,\dots, n_m)$-symmetric if $P_{s}^{(n_1,\dots, n_m)} \in \mathcal{M}_{n_1+\cdots +n_m}$ whenever $P \in \mathcal{M}_{n_1+\cdots +n_m}$.
\end{definition}

\begin{theorem}
The $\mathcal{\prod}_{\gamma_{s, s_1,\overset{n_1}{\dots}, s_1,\dots, s_m,\overset{n_m}{\dots}, s_m}}^{ev}$ is an $(n_1,\dots, n_m)$-symmetric ideal.
\end{theorem}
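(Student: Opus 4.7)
The plan is to exploit the vector-space structure of ideals together with the block-structure of the symmetrization. Recall that
\[
T_{s}^{(n_1,\dots,n_m)}=\frac{1}{n_1!\cdots n_m!}\sum_{\sigma_i\in S_{n_i}}T_{\sigma_1,\dots,\sigma_m},
\]
so since $\prod_{\gamma_{s,s_1,\overset{n_1}{\dots},s_1,\dots,s_m,\overset{n_m}{\dots},s_m}}^{ev}$ is (in particular) a linear subspace of $\mathcal{L}({}^{n_1}E_1,\dots,{}^{n_m}E_m;F)$, it is enough to show that for each tuple $(\sigma_1,\dots,\sigma_m)\in S_{n_1}\times\cdots\times S_{n_m}$ the operator $T_{\sigma_1,\dots,\sigma_m}$ stays in the ideal.

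Fix $T$ in the ideal and fix such a tuple of permutations. Take an arbitrary base point $a=(a_1^{(1)},\dots,a_{n_1}^{(1)},\dots,a_1^{(m)},\dots,a_{n_m}^{(m)})$ and sequences $(x_j^{(i,k)})_{j=1}^{\infty}\in\gamma_{s_i}(E_i)$ for $i=1,\dots,m$ and $k=1,\dots,n_i$. Writing $\widetilde{a}$ and $\widetilde{x}_j$ for the tuples obtained from $a$ and $(x_j^{(i,k)})_{i,k}$ by applying $\sigma_i$ to the $i$-th block of entries, we get
\[
T_{\sigma_1,\dots,\sigma_m}(a+x_j)-T_{\sigma_1,\dots,\sigma_m}(a)=T(\widetilde{a}+\widetilde{x}_j)-T(\widetilde{a}).
\]
Now $\widetilde{a}$ is again a point of $E_1^{n_1}\times\cdots\times E_m^{n_m}$, and crucially the permuted sequences $(x_j^{(i,\sigma_i(k))})_{j=1}^{\infty}$ still lie in $\gamma_{s_i}(E_i)$, because the hypothesis assigns the same sequence class $\gamma_{s_i}$ to every one of the $n_i$ positions of the $i$-th block. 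Hence the ev-summing assumption on $T$, applied at the base point $\widetilde{a}$ with the permuted sequences, yields $(T(\widetilde{a}+\widetilde{x}_j)-T(\widetilde{a}))_{j=1}^{\infty}\in\gamma_s(F)$, which is exactly what is needed for $T_{\sigma_1,\dots,\sigma_m}$.

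Averaging over the $(n_1!\cdots n_m!)$ tuples of permutations and using that the linear combination of ev-summing operators is again ev-summing (indeed $\gamma_s(F)$ is a Banach space), we conclude that $T_{s}^{(n_1,\dots,n_m)}$ belongs to $\prod_{\gamma_{s,s_1,\overset{n_1}{\dots},s_1,\dots,s_m,\overset{n_m}{\dots},s_m}}^{ev}$, which is the asserted $(n_1,\dots,n_m)$-symmetry. There is no real analytic obstacle here; the only thing one must be careful with is the index bookkeeping, and the proof works precisely because the sequence classes are constant on each block, a feature that is built into the notation $\gamma_{s,s_1,\overset{n_1}{\dots},s_1,\dots,s_m,\overset{n_m}{\dots},s_m}$.
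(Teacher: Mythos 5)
Your argument is correct and is essentially the paper's own proof: you show each $T_{\sigma_1,\dots,\sigma_m}$ remains in the ideal because permuting within a block just relocates the base point and the (same-class) sequences, and then you average using the linear structure of the ideal. Your version is slightly more explicit than the paper's about why the permuted sequences still qualify, namely that the sequence class is constant across each block, but the route is the same.
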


\begin{proof}
Let $E_1,\dots, E_m, F$ be Banach spaces and $T \in \mathcal{\prod}_{\gamma_{s, s_1,\overset{n_1}{\dots}, s_1,\dots, s_m,\overset{n_m}{\dots}, s_m}}^{ev}(^{n_1}E_1,\dots, ^{n_m}E_m; F)$. Then, for every $a_1^{(i)},\dots, a_{n_i}^{(i)} \in E_i$,
$\left(x_j^{(i, 1)} \right)_{j=1}^{\infty},\dots, \left(x_j^{(i, n_i)} \right)_{j=1}^{\infty} \gamma_{s_i}(E_i)$
and $\sigma_i \in S_{n_i}$, $i=1,\dots, m$
\begin{align*}
\left(\begin{array}{cc}
T\left(a_{\sigma_1(1)}^{(1)} + x_j^{(1, \sigma_1(1))},\dots, a_{\sigma_1(n_1)}^{(1)} + x_j^{(1, \sigma_1(n_1))},\dots, a_{\sigma_m(1)}^{(m)} + x_j^{(m, {\sigma_m(1)})},\dots, a_{{\sigma_m(n_m)}}^{(m)} + x_j^{(m, {\sigma_m(n_m)})} \right)\\
- T\left(a_{\sigma_1(1)}^{(1)},\dots, a_{\sigma_1(n_1)}^{(1)},\dots, a_{\sigma_m(1)}^{(m)},\dots, a_{{\sigma_m(n_m)}}^{(m)} \right)
\end{array}  \right)_{j=1}^{\infty}
\end{align*}
belongs to $\gamma_s(F)$. Since $\mathcal{\prod}_{\gamma_{s, s_1,\overset{n_1}{\dots}, s_1,\dots, s_m,\overset{n_m}{\dots}, s_m}}^{ev}(^{n_1}E_1,\dots, ^{n_m}E_m; F)$ is a linear space, it follows that $T_s^{(n_1,\dots, n_m)} \in \mathcal{\prod}_{\gamma_{s, s_1,\overset{n_1}{\dots}, s_1,\dots, s_m,\overset{n_m}{\dots}, s_m}}^{ev}(^{n_1}E_1,\dots, ^{n_m}E_m; F)$.
\end{proof}

\end{document}